\newtheorem{thme}{Theorem}
\newtheorem{thm}{Theorem}[section]
\newtheorem{prop}[thm]{Proposition}
\newtheorem{lemma}[thm]{Lemma}
\newtheorem*{conjecture}{Conjecture}
\theoremstyle{definition}
\theoremstyle{remark}
\newcommand{\NN}{\mathbb{N}}
\newcommand{\PP}{\mathbb{P}}
\newcommand{\RR}{\mathbb{R}}
\newcommand{\ZZ}{\mathbb{Z}}
\newcommand{\norma}{\mathscr{N}}
\newcommand{\centra}{\mathscr{Z}}
\newcommand{\CCC}{\mathcal{C}}
\newcommand{\PPP}{\mathcal{P}}
\newcommand{\inv}{^{-1}}
\newcommand{\la}{\langle}
\newcommand{\ra}{\rangle}
\newcommand{\cat}{CAT($0$) }
\def\co{\colon\thinspace}
\DeclareMathOperator{\Aut}{Aut}
\DeclareMathOperator{\CAT}{CAT(0)}
\DeclareMathOperator{\Ch}{Ch}
\DeclareMathOperator{\charact}{char}
\DeclareMathOperator{\id}{id}
\DeclareMathOperator{\Min}{Min}
\DeclareMathOperator{\op}{op}
\DeclareMathOperator{\proj}{proj}
\DeclareMathOperator{\Stab}{Stab}
\begin{document}

\title{Can an anisotropic reductive group admit a Tits system?}

\author{Pierre-Emmanuel {Caprace}*}
\address{Universit\'e catholique de Louvain\\ Chemin du Cyclotron 2\\ 1348 Louvain-la-Neuve\\ Belgium}
\email{pierre-emmanuel.caprace@uclouvain.be}
\thanks{*Supported by the Fund for Scientific Research--F.N.R.S., Belgium}

\author{Timoth\'ee \textsc{Marquis}}
\address{Universit\'e libre de Bruxelles\\ Boulevard du Triomphe\\ 1050 Bruxelles\\ Belgium}
\email{tmarquis@ulb.ac.be}

\date{August 2009}

\subjclass[2000]{%
20E42, 
20G15 (Primary); 
22C05 (Secondary)} 

\begin{abstract}
Seeking for a converse to a well-known theorem by Borel--Tits, we address the question whether the group of rational points $G(k)$ of an anisotropic reductive $k$-group may admit a split spherical BN-pair. We show that if $k$ is a perfect field or a local field, then such a BN-pair must be virtually trivial. We also consider arbitrary compact groups and show that the only abstract BN-pairs they can admit are spherical, and even virtually trivial provided they are split.
\end{abstract}

\maketitle


\section{Introduction}
In a seminal paper~\cite{BT}, Armand Borel and Jacques Tits
established --- amongst other things --- that the group $G(k)$ of
$k$-rational points of a (connected) reductive  linear algebraic
$k$-group $G$ always possesses a canonical BN-pair, where $k$ is an
arbitrary ground field. More precisely, they showed that if $P$ is a
minimal parabolic $k$-subgroup of $G$, and if $N$ is the normalizer
in $G$ of some maximal $k$-split torus contained in $P$, then
$(P(k),N(k))$ is a BN-pair for $G(k)$. This result constitutes a
cornerstone in understanding the abstract group structure of the
group of $k$-rational points $G(k)$. As an application, it yields
for example the celebrated simplicity result of Tits~\cite{Ti64}.
Of course, the aforementioned BN-pair is trivial when  $G$ is
\textbf{anisotropic} over $k$. (Abusing slightly the standard
conventions, we shall say that $G$ is anisotropic if it has no
proper $k$-parabolic subgroup, \emph{i.e.} if $P = G$. As is
well-known, this definition coincides with the standard one in case
$G$ is semi-simple (see \cite[11.21]{Borel})). In fact, the abstract group
structure of $G(k)$ remains intriguing and mysterious  to a large
extent in the anisotropic case. In this context, we propose the
following.

\begin{conjecture}[Converse to Borel--Tits]
Let $G$ be a reductive algebraic $k$-group which is anisotropic over $k$. Then every  split spherical BN-pair for $G(k)$ is trivial.
\end{conjecture}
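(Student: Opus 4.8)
The plan is to fix a split spherical BN-pair $(B,N)$ for $\Gamma := G(k)$, with $T := B\cap N$, finite Weyl group $W := N/T = \langle S\rangle$ and root subgroups $(U_\alpha)_{\alpha\in\Phi}$ such that $B = T\cdot U^{+}$, $U^{+} := \langle U_\alpha : \alpha > 0\rangle$ being nilpotent; let $\Delta$ be the associated thick spherical building, on which $\Gamma$ acts strongly transitively and which is Moufang since the BN-pair is split. One must prove $\Delta$ is a point, i.e. $W = 1$ and $B = \Gamma$. Assuming $W\neq 1$, the first move is a \emph{reduction to rank one}: for a simple root $\alpha$ the subgroup $X_\alpha := \langle U_\alpha, U_{-\alpha}\rangle$ contains the fundamental reflection and, with $T_\alpha := X_\alpha\cap T$, carries a split BN-pair of rank one with Weyl group $\ZZ/2\ZZ$; by thickness of $\Delta$ the root group $U := U_\alpha$ is nontrivial, so $X_\alpha$ is the little projective group of a proper Moufang set $(Y,(U_y))$ with $U$ regular on $Y\smallsetminus\{y\}$ and $\card(Y) = \card(U)+1\geq 3$. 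Since $U$ finite for every simple root forces $\Delta$ finite, hence $[\Gamma:B]<\infty$ (which is the \emph{virtually trivial} conclusion), it is enough to rule out, inside $G(k)$, a rank-one group $X_\alpha$ of this kind with $U$ infinite.

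This is where the hypothesis on $k$ comes in. If $k$ is a \emph{local field}, then $G(k)$, or at least its semisimple part (which supports all the root subgroups), is compact, so it suffices to know that a compact group contains no such $X_\alpha$. In fact a compact group admits no non-spherical abstract BN-pair at all --- by the fixed-point theorem for isometric actions of compact groups on complete CAT(0) spaces, applied to the Davis realisation of a non-spherical building --- and no infinite split spherical one, since a rank-one group acting $2$-transitively on an infinite set through a regular root group has no nontrivial finite-dimensional unitary representation and therefore cannot embed into a compact group. If $k$ is \emph{perfect}, one exploits that $G(k)$ has no nontrivial unipotent element: by Borel--Tits a nontrivial unipotent $u\in G(k)$ lies, using perfectness, in $R_u(P)(k)$ for a proper parabolic $k$-subgroup $P$, contradicting anisotropy. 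It then suffices to extract from an $X_\alpha$ with $U$ infinite a nontrivial unipotent element of $G(k)$. If $X_\alpha$ is soluble this is quick: the identity component of its Zariski closure in $G$ is a connected soluble subgroup of an anisotropic reductive group over a perfect field, hence has trivial unipotent radical and is a torus, so $X_\alpha$ is virtually abelian --- impossible for a $2$-transitive action on the infinite set $Y$. If $X_\alpha$ is non-soluble, I would combine what is known about linear rank-one Moufang groups with the rigidity (Borel--Tits) of abstract homomorphisms of isotropic simple algebraic groups: these should force $X_\alpha$ to be, up to isogeny and commensurability, the group $H(\ell)$ of an absolutely almost simple $\ell$-isotropic group $H$ over a field $\ell$ embedding (possibly purely inseparably) into $k$, with $U$ identified with the $\ell$-points of a nontrivial unipotent subgroup of $H$; pushing this through the inclusion $X_\alpha\hookrightarrow G(k)\hookrightarrow\GL_m(k)$ and passing to Zariski closures produces a nontrivial smooth connected $k$-split unipotent subgroup of $G$ --- the contradiction sought.

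The crux is precisely this last passage: turning the combinatorial root-group datum into a genuinely \emph{algebraic} unipotent subgroup of $G$. Over perfect fields this can be organised via Borel--Tits rigidity and the structure of linear rank-one groups, over local fields via the compactness/CAT(0) argument, but over a general ground field neither tool applies --- which is why the statement is still only a conjecture. Finally, observe that the whole strategy only excludes \emph{infinite} root groups, hence only yields that $\Delta$ is finite, i.e. the \emph{virtually trivial} weakening. To reach the full conjecture one must also prove that $G(k)$ possesses no proper finite-index subgroup which is the Borel subgroup of a split spherical BN-pair with \emph{finite} thick Moufang building; since anisotropic groups such as $\SL_1(D)$ do have proper finite-index (e.g.\ congruence) subgroups, this cannot be formal and would require a study of finite thick Moufang buildings and of the modular representations of their little projective groups --- in my view the real obstacle to settling the conjecture.
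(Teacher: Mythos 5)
You set out to prove the paper's \emph{Conjecture}, but the paper itself contains no proof of it: it only establishes the weaker Theorems~3 and~4 (virtual triviality, i.e. $[G(k):B]<\infty$, and only when $k$ is a local or a perfect field), together with a heuristic explaining why full triviality is tied to the Prasad--Rapinchuk conjecture on finite quotients of anisotropic groups. Your own last paragraph concedes the same limitation: your strategy at best rules out infinite root groups, hence only yields a finite building, and you explicitly leave open the possibility of a nontrivial BN-pair with finite thick building. So the proposal is not a proof of the statement; at best it is a sketch of an alternative route to the paper's Theorems~3 and~4, and like the paper it says nothing about an arbitrary ground field, whereas the conjecture is stated for all $k$.

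Even as a route to the virtually trivial statements there are concrete gaps. In the local-field case you invoke ``the fixed-point theorem for isometric actions of compact groups on complete CAT(0) spaces'', but the BN-pair here is \emph{abstract}: $B$, $N$ and the root groups need not be closed, and the action of $G(k)$ on the Davis realization need not be continuous, so no fixed-point theorem for topological group actions applies. This is precisely the difficulty the paper's Theorems~1 and~2 are designed to overcome: ellipticity is extracted from Bridson's semisimplicity theorem combined with divisibility properties of procyclic and connected compact groups, and continuity is only obtained \emph{a posteriori}, via the Moufang property, by showing that the normalizer of $\overline{V}$ is a (maximal, hence closed) parabolic. Your auxiliary claim that a rank-one group with infinite regular root group admits no nontrivial finite-dimensional unitary representation is asserted without proof and is not obviously true (divisibility of the root group, for instance, is no obstruction to embedding in a compact group). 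In the perfect-field case, the step you yourself call the crux --- promoting the abstract rank-one group $X_\alpha$ to the rational points of an isotropic algebraic group via a classification of linear rank-one Moufang groups plus Borel--Tits rigidity --- is exactly the missing idea and is not available in the literature; the paper's actual proof of Theorem~4 avoids it altogether, using Zariski density of $G(k)$, the Bruhat decomposition to produce a dense double coset, Brundan's double-coset density theorem to force a nontrivial unipotent radical in the identity component of $\overline{B}$, and only then the Borel--Tits result on unipotent elements over perfect fields to contradict anisotropy.
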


Recall that a BN-pair $(B,N)$ for a group $G$ is called {\bf
spherical} if the associated Weyl group $W:=N/T$ is finite, where
$T:=B\cap N$. It is said to be {\bf split} if it is  saturated
(\emph{i.e.} $T=\bigcap_{w\in W}{wBw\inv}$), and if there exists a
nilpotent normal subgroup $U\triangleleft B$ such that $B\cong
U\rtimes T$. Note that if $(B,N)$ is irreducible of rank at least
$2$, one can show that $U$ is automatically nilpotent
(see~\cite{Tent}).  The BN-pair for $G(k)$ described above is always
split in the above sense (\cite[14.19]{Borel}).

Besides the natural search for a converse to Borel--Tits, a
motivation to consider the above conjecture is provided by the
recent work of Peter Abramenko and Ken Brown~\cite{AB07}, who
constructed Weyl transitive actions on trees for certain anisotropic
groups over global function fields. We refer to~\cite[Ch.~6]{ABrown}
for more details on the relations and distinctions between BN-pairs,
strong transitivity and Weyl transitivity.

Our first contribution concerns the special case when the ground
field $k$ is a local field. The $k$-anisotropy of $G$ is then
equivalent to the compactness of $G(k)$ (see~\cite{Prasad}). In
fact, our first step will be to establish the following two results,
which concern arbitrary compact topological groups (not necessarily
associated with algebraic groups).

\begin{thme}\label{1}
Let $G$ be a compact group. Then every BN-pair for $G$ is spherical.
\end{thme}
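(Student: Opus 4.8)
The plan is to argue by contradiction: suppose $G$ is compact and admits a BN-pair $(B,N)$ whose Weyl group $W = N/T$ is infinite, where $T = B \cap N$. The first observation is that an infinite Coxeter group contains elements of infinite order; more precisely, pick $w \in W$ of infinite order (e.g. take two non-commuting involutions $s, t$ among the standard generators lying in a common infinite dihedral parabolic, and set $w = st$). My aim is to produce an element of $G$ of infinite order that nonetheless lies in a compact set whose structure forces a contradiction with compactness --- concretely, I expect to extract an infinite discrete subset of a compact space. The natural device is the Bruhat decomposition $G = \bigsqcup_{w \in W} BwB$: since $W$ is infinite, $G$ is partitioned into infinitely many nonempty pieces, and I want to show these pieces cannot all ``fit'' inside a compact group.

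The key step is to exploit topology together with the Bruhat decomposition. First I would note that $B$, being a subgroup of finite-index-free type containing $T$, has the property that all double cosets $BwB$ with $\ell(w) = \ell(w_0 s) $ relations behave multiplicatively: $BsB \cdot BwB = BswB$ when $\ell(sw) > \ell(w)$, and $BsB \cdot BsB = BsB \cup B$. Iterating, for a geodesic word $w = s_1 \cdots s_n$ one gets $\overline{BsB}^{\,n}$-type growth. Concretely, fixing a generator $s$ with $st$ of infinite order, consider the powers $(st)^n$: they are pairwise distinct in $W$, so $B(st)^nB$ are pairwise disjoint cosets, and $B(st)^nB \subseteq (BsB \cup BtB)^{2n}$. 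Thus $G$ contains infinitely many disjoint nonempty open-or-closed subsets of the form $BwB$ with $w = (st)^n$. Now here is where compactness must bite: since $B$ is a subgroup and $G/B$ carries a $G$-action, one shows $G/B$ must be infinite (the $B$-orbits on $G/B$ are exactly the images of the $BwB$), yet I claim $G/B$ should be compact and the BN-pair axioms will force it to also be discrete, hence finite --- a contradiction. To make $G/B$ discrete, I would argue that $B$ is open: this is the crux.

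The main obstacle, and the heart of the argument, is showing that $B$ is open in $G$ (equivalently that $G/B$ is discrete, equivalently that $T$, or $B$, has finite index in... no --- rather that the Bruhat cells are open). Here the BN-pair axioms do real work. Since $G = BNB$ and $N$ is generated by $T$ together with lifts of the simple reflections, and since $B \supseteq T$, one reduces to controlling $\overline{B}$. The strategy is: $\overline{B}$ is again a subgroup (closure of a subgroup in a topological group), it contains $B$, and using axiom (T3) --- $sBw \subseteq BwB \cup BswB$ --- together with the fact that $\overline{B}$ normalizes nothing extra, one shows $\overline{B} = B$ or $\overline{B} = G$; the latter is excluded because then $B$ would be dense, forcing (via $G = \bigsqcup BwB$ and Baire category on the compact group $G$) that some $\overline{BwB}$ has nonempty interior, and pushing this around by the group action and the finiteness of the generating set $S$ of $W$ yields that $W$ is finite after all. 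So the real content is a Baire-category argument: $G$ compact (hence Baire) is covered by the countably many sets $\overline{BwB}$ (countable since $W$ is finitely generated), so one of them, say $\overline{BwB}$, has interior; translating, $B$-translates cover $G$ by finitely... this forces $[W : \text{(finite parabolic)}] < \infty$ type conclusions, and chasing the BN-pair axioms from there collapses $W$ to a finite group. I expect the delicate point to be promoting ``some $\overline{BwB}$ has interior'' to ``$B$ itself is open'', which should follow because $Bw_0B$ (if $w_0$ existed) or more robustly the density of $B \cdot \{$finitely many $w\}$ combined with $BsBsB = BsB \cup B$ lets one absorb the closure into $B$.
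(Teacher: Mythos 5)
There is a genuine gap, and it sits exactly where you flag it yourself. Your Baire-category step is fine as far as it goes: $W$ is countable, $G$ is compact Hausdorff hence Baire, and $G=\bigcup_{w\in W}\overline{BwB}$, so some $\overline{BwB}$ has nonempty interior. But this conclusion is vacuous unless you can say something nontrivial about the closures, and nothing in the BN-pair axioms prevents $B$ from being \emph{dense} in $G$: the theorem is about abstract BN-pairs, $B$ is not assumed closed, and if $\overline{B}=G$ then every $\overline{BwB}$ equals $G$ and the Baire argument yields no information. Your attempt to exclude this case is circular (``the latter is excluded because \dots\ yields that $W$ is finite after all'' is precisely the statement to be proved), and the dichotomy ``$\overline{B}=B$ or $\overline{B}=G$'' is unjustified: the subgroups of $G$ containing $B$ are exactly the $2^{|S|}$ standard parabolics $BW_JB$, $J\subseteq S$, so the closure of $B$ could a priori be any of them, and even $\overline{B}=B$ (i.e.\ $B$ closed) would not make $B$ open nor the Bruhat cells open. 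The passage ``pushing this around by the group action \dots\ collapses $W$ to a finite group'' is never carried out, and it is the entire content of the theorem; note also that even knowing a neighbourhood of $1$ lies in the closure of finitely many double cosets only gives $G=\overline{\bigcup_{w\in F}BwB}$ for a finite $F$, which again says nothing when closures are large. (A minor further slip: an infinite Coxeter group need not contain an infinite dihedral \emph{standard} rank-$2$ parabolic --- e.g.\ the $(3,3,3)$ triangle group --- though the existence of infinite-order elements is indeed a standard fact.)

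The paper's proof takes a completely different, geometric route precisely because the purely topological attack founders on the possible non-closedness of $B$. It passes to the Davis CAT(0) realization of the building $\Delta(G,B)$, on which $G$ acts by cellular isometries (with no continuity hypothesis), and proves that \emph{every} element of a compact group acting on such a complex is elliptic: by Bridson's semisimplicity theorem translation lengths are semisimple and bounded away from $0$, and a divisibility analysis of the compact group $Q=\overline{\langle g\rangle}$ --- its identity component is divisible because compact connected, and its component group is procyclic, hence a product of pro-$p$ groups each $q$-divisible for $q\neq p$ --- contradicts the existence of a hyperbolic element. Restricting to an apartment, all of $W$ then acts elliptically and properly on the Davis complex of the Coxeter complex, which forces $W$ to be finite since an infinite finitely generated Coxeter group contains elements of infinite order. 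If you want to salvage your approach you would need an independent argument ruling out dense $B$ (equivalently, controlling $\overline{BwB}$), and that is essentially as hard as the theorem itself.
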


\begin{thme}\label{2}
Let $G$ be a compact group possessing a split spherical BN-pair $(B,N)$. Then, the associated building is finite. In other words, $[G:B]<\infty$.
\end{thme}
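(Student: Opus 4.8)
My plan is to reduce the statement, via the Bruhat decomposition and the finiteness of $W$, to a question about rank one Tits systems, and then to play the resulting ``unipotent'' structure against the rigidity of compact groups.

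Since $W=N/T$ is finite, the Bruhat decomposition expresses $G=\bigsqcup_{w\in W}BwB$ as a \emph{finite} union of double cosets, so that $[G:B]=\sum_{w\in W}[B:B\cap wBw\inv]$. Using that the $BN$-pair is split and saturated --- so $B=U\rtimes T$ and $T\subseteq wBw\inv$ for every $w\in W$ --- the structure theory of split spherical $BN$-pairs provides root subgroups $U_\alpha\le U$ indexed by the (finite) root system $\Phi$ of $W$, with $U=\prod_{\alpha\in\Phi^+}U_\alpha$ and $[B:B\cap wBw\inv]=\prod_{\alpha\in\Phi^+,\,w\inv\alpha\in\Phi^-}|U_\alpha|$. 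Hence $[G:B]$ is finite as soon as $|U_{\alpha_s}|<\infty$ for each simple reflection $s$ (every root is $W$-conjugate to a simple one, and conjugate root groups have the same cardinality). Fixing $s$ and a representative $n\in N$ of $s$, the subgroup $H:=\langle U_{\alpha_s},\,nU_{\alpha_s}n\inv\rangle\le G$ carries a split $BN$-pair of rank one, with Borel subgroup $B_1=U_{\alpha_s}\rtimes T_1$ where $T_1\le T$, and $[H:B_1]=1+|U_{\alpha_s}|$. So it is enough to prove that a group with a split $BN$-pair of rank one which embeds into a compact group has finite building.

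To that end, let $K\triangleleft H$ be the kernel of the action of $H$ on $H/B_1$; one checks $K\subseteq T_1$. If $H/K$ is finite we are done, so assume it is infinite; then, by Tits' simplicity criterion (applicable since $H$ is generated by conjugates of $U_{\alpha_s}$ and $T_1/K$ contains no nontrivial normal subgroup of $H/K$), $H/K$ is simple. Passing to closures inside the ambient compact group $G$ gives $\overline K\triangleleft\overline H$ with $\overline H/\overline K$ compact and $H/(H\cap\overline K)\hookrightarrow\overline H/\overline K$; since $K\subseteq H\cap\overline K\triangleleft H$ and $H/K$ is simple, either $H\cap\overline K=K$ or $H\cap\overline K=H$. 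If $H\cap\overline K=K$, then the infinite simple group $H/K$ embeds into the compact group $\overline H/\overline K$. If $H\cap\overline K=H$, then $H\subseteq\overline K\subseteq\overline{B_1}$, so $B_1$ is dense in $H$; since $U_{\alpha_s}\triangleleft B_1$, its closure $\overline{U_{\alpha_s}}$ is normal in $\overline{B_1}=\overline H$, hence contains $nU_{\alpha_s}n\inv$ and therefore all of $H=\langle U_{\alpha_s},nU_{\alpha_s}n\inv\rangle$; but $\overline{U_{\alpha_s}}$ is nilpotent, being the closure of the nilpotent group $U_{\alpha_s}$, so $H$ would be nilpotent --- impossible, as it has an infinite (hence non-abelian) simple quotient. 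So everything comes down to one statement, which I expect to be the crux: \emph{an infinite simple group carrying a split $BN$-pair of rank one does not embed into a compact group.}

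For this, I would use the structure of compact groups --- $G$ is an extension of a profinite group by a connected compact group, and a connected compact group, modulo a connected abelian normal subgroup, is a product of compact simple Lie groups. Projecting a putative infinite simple subgroup $S$ successively into these quotients, and using that $S$ is simple and non-abelian together with the fact that an infinite simple group is dense in no nontrivial profinite group, one is reduced to the case that $S$ embeds into a compact simple Lie group, hence into some $U(n)$. It then remains to check that $S$ has no faithful $n$-dimensional unitary representation. Here one exploits that the split rank one structure endows $S$ with an infinite nilpotent ``unipotent'' root subgroup $V$ on which the conjugating torus acts by ``scalings'': in characteristic zero this forces each nontrivial $v\in V$ to be conjugate in $S$ to proper powers and $m$-th roots of itself for infinitely many $m$, so in a faithful unitary representation the eigenvalues of $\rho(v)$ are permuted by $\lambda\mapsto\lambda^m$ for infinitely many $m$ and are therefore roots of unity, making $\rho(v)$ of finite order and contradicting $|V|=\infty$ with $\rho$ faithful; in characteristic $p$ one instead observes that $V$ contains elementary abelian $p$-subgroups of unbounded rank, which no $U(n)$ does. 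Carrying this out rigorously --- and reconciling the purely abstract Tits-system combinatorics with the analytic and profinite rigidity, given that $B$, $U$ and the root groups are \emph{a priori} not closed in $G$, which is exactly why one must argue through closures as above --- is the main work. (Sphericity is used only to make $\Phi$, $W$ and the set of Bruhat cells finite; by Theorem~\ref{1} it is moreover automatic for compact groups.)
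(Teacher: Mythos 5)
Your strategy is genuinely different from the paper's (which never reduces to rank one: it shows each maximal standard parabolic $P$ is closed by proving $\norma_G(\overline V)=P$ for $V=\bigcap_{p\in P}pUp\inv$, using only Tits' transitivity lemma applied to $\centra(\overline V)$, and then concludes via continuity, compactness of panel stabilizers and local finiteness of spherical buildings), but as written it has two genuine gaps. First, the simplicity of $H/K$ does not follow from ``Tits' simplicity criterion'': besides irreducibility of the Weyl group, nilpotency of the unipotent part and generation by its conjugates, Tits' theorem requires the group to be \emph{perfect}, and perfectness is not automatic for rank-one split BN-pairs --- already $\mathrm{PSL}_2(2)$, $\mathrm{PSL}_2(3)$ and $\mathrm{PSU}_3(2)$ fail it, and for infinite Moufang sets the simplicity of the little projective group is not a formality (it is known only under additional hypotheses, e.g.\ abelian root groups, and is open in general). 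Since your whole closure dichotomy ($H\cap\overline K=K$ or $H\cap\overline K=H$) and the ensuing contradictions rest on that simplicity, the argument breaks here. Second --- and this is the step you yourself flag as ``the main work'' --- the crux statement, that an infinite simple group with a split rank-one BN-pair has no faithful finite-dimensional unitary representation, is not proved. The sketch invokes an underlying ``characteristic'', a torus acting on the root group ``by scalings'', and the conjugacy of each unipotent element to proper powers and $m$-th roots of itself; none of this is available for an \emph{abstract} split BN-pair: there is no ground field, $T_1$ may be trivial, and nothing in the axioms yields such conjugacy relations or elementary abelian $p$-subgroups of unbounded rank. So the proposal ultimately reduces the theorem to an unproved statement that looks no easier than the theorem itself.

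Two further points to incorporate if you pursue this route. The formula $[B:B\cap wBw\inv]=\prod_{\alpha\in\Phi^+,\,w\inv\alpha\in\Phi^-}|U_\alpha|$ and the existence of the rank-one subgroups $H$ already use the Moufang/root-group machinery, which for split BN-pairs is guaranteed (via the De Medts--Haot--Tent--Van Maldeghem equivalence cited in the paper) only for irreducible types of rank at least $2$; you would therefore need the paper's explicit reduction to irreducible factors, and a separate treatment of rank-one factors, before your Bruhat-cell computation applies. Finally, note how the paper's argument is calibrated precisely to avoid your first gap: instead of simplicity it only needs that a normal subgroup acting nontrivially on the building acts chamber-transitively (Tits' transitivity lemma), which suffices to force $\overline V$ to be non-normal in $G$, hence the parabolics closed and the action continuous; compactness then finishes the proof through the elementary lemma on continuous $2$-transitive actions on $T_1$ spaces.
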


We emphasize that the BN-pairs appearing in these statements are
\emph{abstract}: The corresponding subgroups $B$ and $N$ are
\emph{not} supposed to be closed in $G$. Specializing to anisotropic
groups over local fields, we deduce the following immediate
corollary.

\begin{thme}\label{3}
Let $k$ be a local field and $G$ be a connected semi-simple algebraic $k$-group which is anisotropic over $k$. Then:
\begin{enumerate}[(1)]
\item
Every BN-pair for $G(k)$ is spherical.
\item
Every split spherical BN-pair $(B,N)$ for $G(k)$ is `virtually trivial', in the sense that $B$ has finite index in  $G(k)$.
\end{enumerate}
\end{thme}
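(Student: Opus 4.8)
The plan is to derive both assertions directly from Theorems~\ref{1} and~\ref{2}, the only genuine input being the translation of the algebraic hypothesis into a topological one.

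First I would invoke the compactness criterion over local fields: if $k$ is a local field and $G$ is a connected semi-simple $k$-group, then $G(k)$, equipped with its natural (locally compact) topology, is compact if and only if $G$ is anisotropic over $k$. This is the result of Prasad recalled above (see \cite{Prasad}); it can be seen through Bruhat--Tits theory --- $k$-anisotropy means the affine building of $G$ over $k$ reduces to a single point, so $G(k)$ itself is bounded and hence compact --- or via the structure theory together with the fact that an anisotropic $k$-torus has compact group of $k$-points. Under the hypotheses of the theorem we thus conclude that $G(k)$ is a compact group.

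Part (1) is then nothing but Theorem~\ref{1} applied to $G(k)$: every BN-pair for the group $G(k)$ is spherical. I would stress here that this covers \emph{abstract} BN-pairs, with $B$ and $N$ not assumed to be closed subgroups of $G(k)$, which is exactly the generality in which Theorem~\ref{1} is stated. For part (2), a split spherical BN-pair $(B,N)$ for the abstract group $G(k)$ is, a fortiori, a split spherical BN-pair for the compact group $G(k)$; hence Theorem~\ref{2} applies verbatim and gives that the associated building is finite, i.e. $[G(k):B]<\infty$. This is precisely the asserted virtual triviality of the BN-pair, and it completes the proof.

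Since this is an immediate corollary of Theorems~\ref{1} and~\ref{2}, there is no real obstacle. The one point that deserves care --- and the reason the statement is restricted to semi-simple, rather than merely reductive, groups --- is the equivalence between $k$-anisotropy and compactness of $G(k)$: for a general reductive group with a non-central $k$-structure this equivalence can fail, as the example $G=\mathbb{G}_m$ already shows (it has no proper $k$-parabolic, yet $G(k)=k^\times$ is not compact). Once this equivalence is secured in the semi-simple case, Theorems~\ref{1} and~\ref{2} do all the work.
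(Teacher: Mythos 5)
Your proposal is correct and is essentially the paper's own argument: the authors likewise invoke Prasad's criterion that $k$-anisotropy is equivalent to compactness of $G(k)$ over a local field, and then deduce the theorem as an immediate corollary of Theorems~\ref{1} and~\ref{2}. Your side remark on the semi-simplicity hypothesis (the $\mathbb{G}_m$ example) is a sensible clarification but not needed for the deduction itself.
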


Finally, we consider the case of perfect ground fields.

\begin{thme}\label{4}
Let $k$ be a perfect field and $G$ be a reductive algebraic $k$-group which is anisotropic over $k$.  Then every split spherical BN-pair for $G(k)$ is virtually trivial.
\end{thme}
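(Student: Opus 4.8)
The plan is to show that the associated building is finite, by combining the structure theory of (split, spherical) BN-pairs --- equivalently of Moufang spherical buildings --- with the Borel--Tits rigidity theorem for abstract homomorphisms of isotropic simple algebraic groups, the crucial input from anisotropy being that $G(k)$ has no non-trivial unipotent element. First I would carry out the standard reductions. We may assume the BN-pair $(B,N)$ is irreducible, since a reducible split spherical BN-pair decomposes as a direct sum of irreducible ones along an almost-direct product decomposition of the derived group of $G$, and it suffices to treat each factor. We may also assume $k$ infinite: over a finite field a reductive group is quasi-split, so an anisotropic one is solvable, hence a torus, and then $G(k)$ is finite and there is nothing to prove. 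Writing $(W,S)$ for the type of the BN-pair and letting $(U_\alpha)_\alpha$ be the root groups of the associated root group datum, the condition $[G(k):B]<\infty$ is equivalent to the building being finite, which --- the BN-pair being spherical --- amounts to every $U_\alpha$ being finite. So the goal becomes: \emph{every root group $U_\alpha$ is finite.}

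Next I would record the two consequences of anisotropy that drive the argument. Since $k$ is perfect, the Jordan decomposition $g=g_sg_u$ of any $g\in G(k)$ satisfies $g_s,g_u\in G(k)$; by the Borel--Tits theorem on unipotent elements, a non-trivial unipotent of $G(k)$ lies in the unipotent radical of a proper parabolic $k$-subgroup, which does not exist by anisotropy, so $g=g_s$ is semisimple. Hence (i) every element of $G(k)$ is semisimple, and (ii) any nilpotent subgroup $V\le G(k)$ of bounded exponent is finite: the exponent condition is Zariski-closed, so the Zariski closure $\overline V$ has the same exponent; its unipotent radical is defined over the perfect field $k$, hence $k$-split, so it has non-trivial rational --- unipotent --- points unless it is trivial, forcing $\overline V$ reductive; being nilpotent and reductive, $\overline V{}^\circ$ is a torus, which has finite exponent only if trivial, whence $\overline V$ is finite. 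The same reasoning shows that the Zariski closure in $G$ of any perfect subgroup of $G(k)$ is semisimple and $k$-anisotropic.

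Now for the heart of the matter. Suppose some $U_\alpha$ is infinite. Passing to the rank-$1$ subgroup $G_\alpha=\langle U_\alpha,U_{-\alpha}\rangle\le G(k)$ carrying the induced split rank-$1$ BN-pair --- or to a rank-$2$ residue through $\alpha$ --- I would invoke the classification of thick irreducible spherical buildings of rank $\ge 2$ (Tits), the Tits--Weiss classification of Moufang polygons in rank $2$, and the theory of Moufang sets / split rank-$1$ groups in rank $1$. This splits into two cases. In the \emph{twisted/mixed case} (mixed types in characteristic $2$ or $3$, and the Suzuki/Ree rank-$1$ groups) the root groups are $p$-groups of bounded exponent, hence finite by consequence (ii) above --- contradiction. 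In the \emph{algebraic case}, the little projective group $X$ of the relevant residue is, modulo a finite central subgroup, a subgroup of bounded index in $\mathcal H(\ell)$ for some absolutely simple $\ell$-isotropic algebraic group $\mathcal H$ over a field $\ell$, and the infinitude of $U_\alpha$ forces $\ell$ to be infinite. Thus we obtain a non-trivial abstract homomorphism $\rho\colon\mathcal H(\ell)^+\to G(k)$ with Zariski-dense image. By the Borel--Tits theorem on abstract homomorphisms, $\rho$ is \emph{standard}: the composite of a field embedding $\ell\hookrightarrow k$ (preceded by a special isogeny in small characteristic) with a morphism of algebraic $k$-groups; in particular $\rho$ sends unipotents to unipotents. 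Since $G(k)$ has no non-trivial unipotent element, $\rho$ kills every unipotent of $\mathcal H(\ell)$; but $\mathcal H(\ell)^+$ is generated by its unipotent elements, so $\rho$ is trivial, contradicting both the non-triviality of $\mathcal H(\ell)^+$ and the Zariski-density of the image. In every case $U_\alpha$ must be finite, so $[G(k):B]<\infty$.

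The main obstacle is the extraction step in the last paragraph: turning the bare hypothesis ``some root group is infinite'' into an honest absolutely simple isotropic algebraic group $\mathcal H$ over an \emph{infinite} field together with a Zariski-dense abstract homomorphism into $G(k)$, and checking the dichotomy is exhaustive. This is where the full weight of the classification of spherical buildings / root group data enters, and it is most delicate in rank $1$ --- where no classification of Moufang sets is available, so one genuinely exploits the \emph{split} hypothesis (through the Weyl/Hua maps) to pin down the coordinatizing structure --- and for projective planes coordinatized by a division ring that is infinite-dimensional over its centre, where $\mathrm{PSL}_2$ or $\mathrm{PSL}_3$ of that ring is not literally the group of rational points of an algebraic group, so the Borel--Tits rigidity theorem must be applied in (or deduced in) a suitably general form. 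One must also keep careful track of the finite central subgroups relating $G_\alpha$, its little projective group, and $G(k)$, and verify at the outset that the case $W$ finite of rank $0$ or the thin case reduces trivially to $[G(k):B]<\infty$.
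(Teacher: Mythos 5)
Your overall strategy --- classify the possible Moufang structures of the building and rule each one out using the absence of unipotent elements in $G(k)$ --- has genuine gaps exactly at the points you yourself flag as ``obstacles'', and they are not minor. First, the theorem includes irreducible split BN-pairs of rank $1$, and there is no classification of Moufang sets (equivalently, of groups with a split saturated BN-pair of rank $1$); the appeal to ``Weyl/Hua maps to pin down the coordinatizing structure'' is not an argument, since no such structure theorem exists in general. Likewise, in rank $\geq 2$ the Tits and Tits--Weiss classifications produce, besides the mixed and honestly algebraic cases, buildings coordinatized by division rings (or forms over them) that are infinite-dimensional over their centres; their little projective groups are not groups of rational points of algebraic groups, and Borel--Tits rigidity does not apply to them --- ``a suitably general form'' of it is precisely what would have to be proved. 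Second, even in the algebraic case your homomorphism points the wrong way: what the split BN-pair actually provides is a subgroup of $G(k)$ generated by the root groups together with a surjection onto the little projective group, i.e.\ onto a central quotient related to $\mathcal{H}(\ell)^+$; the kernel lies in $T\cap\ker(G(k)\curvearrowright\Delta)$ and need not be finite, and a central extension of $\mathcal{H}(\ell)^+$ need not split, so the map $\rho\colon \mathcal{H}(\ell)^+\to G(k)$ you want to feed into Borel--Tits rigidity is not available without a further argument. (Also, reducing a reducible BN-pair ``along an almost-direct product decomposition of the derived group of $G$'' presupposes a link between the abstract BN-pair and the algebraic structure of $G$ that is not part of the hypotheses; the correct reduction decomposes the building and yields split BN-pairs for the same group $G(k)$, as in the proof of Theorem~2.)

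For comparison, the paper's proof avoids all classification and works uniformly in every rank: assuming $[G(k):B]=\infty$, one takes the Zariski closure $\overline{B}$ of $B$, uses the abstract Bruhat decomposition and the Zariski density of $G(k)$ in $G$ to find $w$ with $\overline{B}w\overline{B}$ dense, hence a dense double coset $AzA$ for $A=(\overline{B})^0$; the simple transitivity of $U$ on the chambers opposite the fundamental chamber shows $\overline{B}\neq G$; Brundan's theorem on double coset density then forces $R_u(A)\neq 1$, and since $k$ is perfect $R_u(A)$ is a nontrivial unipotent $k$-subgroup of $G$, contradicting anisotropy via the Borel--Tits result on unipotent subgroups. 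Your two consequences of anisotropy (no nontrivial unipotent elements of $G(k)$; nilpotent subgroups of bounded exponent are finite) are correct and in the same spirit as this last step, but the classification-dependent core of your argument is where the proof fails to close.
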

Notice that  Theorems~\ref{3} and~\ref{4} are logically independent, since there exist local fields which are not perfect and vice-versa.

It would be very interesting to sharpen the conclusion of
Theorems~\ref{3} and~\ref{4}, that is, to show that the BN-pair must
be trivial, and not only virtually trivial. However, we expect this
to be quite difficult, since it is closely related  to a conjecture
due to Andrei Rapinchuk and Gopal Prasad
(see~\cite{Rapinchukprasad}), which may be stated as follows:
``\emph{Let $G$ be a reductive $k$-group which is anisotropic over
$k$. Then, every finite quotient of $G(k)$ is solvable.}'' As of
today, this conjecture was confirmed only when $G$ is the multiplicative
group of a finite dimensional division algebra (see
\cite{Rapinchuk}). We now sketch informally how these two problems
are related.

On one side, if $G(k)$ possesses a BN-pair with finite associated building $\Delta$, and if $K:=\ker(G(k)\curvearrowright \Delta)$ is the kernel of the corresponding action, then $G(k)/K$ is a finite group whose action on $\Delta$ is faithful, and thus $G(k)/K$ possesses a faithful BN-pair. But these groups have been classified: they are simple Chevalley groups, and in particular are not solvable  (up to two exceptions). Thus, if the BN-pair for $G(k)$ were nontrivial, there would exist (modulo the two exceptions) a non-solvable finite quotient of $G(k)$.

Conversely, suppose that $G(k)$ possesses a nontrivial and
non-solvable finite quotient $F':=G(k)/K$. Let $R\lneq F'$ be the
solvable radical of $F'$, that is, its largest solvable normal
subgroup. Going to the quotient $F:=F'/R$, we thus know that $G(k)$
surjects onto a nontrivial finite group with trivial solvable
radical (namely, $F$). Let now $M$ be a minimal normal subgroup of
$F$. Then $M$ is a direct product of non-Abelian simple groups which
are pairwise isomorphic, say $M\cong S_1\times\dots\times S_k$ with
$S_i\cong S$ for all $i\in\{1,\dots,k\}$. By the classification of
finite simple groups, $S$ is very likely to be a Chevalley group.
Such a group possesses a root datum, and thus also a nontrivial
BN-pair whose associated (finite) building is in bijection with
$S/B$. Repeating this construction for each $S_i$, we then get a
finite building $\Delta=\Delta_1\times\dots\times\Delta_k$ on which
$M=S_1\times\dots\times S_k$ acts strongly transitively. Finally,
the action of $\Aut(M)$ on the set of $p$-Sylow subgroups of $M$
(where $p=\charact k$) induces an action of $\Aut(M)$ on $\Delta$
making the diagram
$$\minCDarrowwidth40pt\begin{CD}
F @>\alpha>>  \Aut(M)\\
@A\iota AA @VVV \\
M @>>\textrm{strongly tr.}> \Aut(\Delta)
\end{CD}$$
commutative, where $\alpha(f)$ denotes the conjugation by $f$ for all $f\in F$. In particular, we get a strongly transitive action of $F$, and thus also of $G(k)$, on the finite building $\Delta$. This yields a nontrivial and virtually trivial BN-pair for $G(k)$.

\subsection*{General conventions} All algebraic groups considered here are supposed to be affine, all topological groups are assumed Hausdorff and all BN-pairs have finite rank.

\subsection*{Acknowledgement} We are very grateful to the anonymous referee for his/her useful detailed comments.

\section{Proof of Theorem 1}

\subsection{Heuristic sketch}
Let $G$ be a compact group and let $(B,N)$ be a BN-pair for $G$.
Also, let $\Delta$ be the associated building. We consider the Davis
realization of $\Delta$, noted $|\Delta|_{\CAT}$ in this paper, and
which is a complete $\CAT$ space, as well as a simplicial complex,
on which $G$ acts by simplicial isometries. The key step in the
proof of Theorem 1 is then to establish that this action is elliptic
(Theorem~\ref{thm compact elliptic} below). To do so, we use a
result of Martin Bridson stating that such an action is always
semi-simple, and we then argue by contradiction, assuming that $G$
possesses an element with no fixed point. Such an element would then
generate a subgroup $Q$ of $G$ which  acts by translations on
$|\Delta|_{\CAT}$. Moreover, the structure of simplicial complex of
$|\Delta|_{\CAT}$ implies that the set of translation lengths of the
elements of $Q$ is discrete at $0$. The contradiction now comes from
divisibility properties of compact and procyclic groups, which we
apply to $Q$.

\subsection{Procyclic groups}
Let $G$ be a profinite group. Recall that $G$ is said to be {\bf procyclic} if there exists a $g\in G$ such that the subgroup generated by $g$ is dense in $G$, that is, $G=\overline{\la g\ra}$. Moreover $G$ is said to be {\bf pro-$p$} for some prime $p$ if every finite Hausdorff quotient of $G$ is a  $p$-group.

The following basic properties of procyclic groups can be found
in~\cite[2.7]{Procyclic}. The symbol $\PP$ denotes the set of all
primes.

\begin{prop}\label{prop structure procyclic}
Let $G$ be a procyclic group. Then,
\begin{enumerate}[(i)]
\item
$G$ is the direct product $G=\prod_{p\in\PP}{G_p}$ of its $p$-Sylow subgroups, and each $G_p$ is a pro-$p$ procyclic group.
\item
$G$ is, in a unique way, a quotient of $\hat{\ZZ}:=\prod_{p\in\PP}{\ZZ_p}$. If $G$ is pro-$p$ for some $p\in\PP$, then it is a quotient of $\ZZ_p$.
\end{enumerate}
\end{prop}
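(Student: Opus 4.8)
\subsection*{Proof strategy}

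The plan is to work throughout with the presentation $G=\varprojlim_{N}G/N$, where $N$ runs over the open normal subgroups of the profinite group $G$. Since $G=\overline{\la g\ra}$, the image of $g$ topologically generates $G/N$, and a finite group generated by one element is cyclic; hence every finite quotient $G/N$ is cyclic. This single observation is what makes all the structure visible, and the two parts amount to pushing the elementary structure theory of finite cyclic groups through an inverse limit.

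For part (i) I would first recall that a finite cyclic group $C$ splits canonically as the direct product of its Sylow subgroups, $C\cong\prod_{p\in\PP}C_p$ (Chinese Remainder Theorem), and that this decomposition is \emph{functorial}: a homomorphism of cyclic groups carries the $p$-part into the $p$-part. Then I would set $G_p:=\varprojlim_{N}(G/N)_p$, the inverse limit of the (unique) $p$-Sylow subgroups of the quotients. Each $G_p$ is a closed subgroup of $G$, is procyclic (an inverse limit of cyclic $p$-groups), and is pro-$p$ since all its finite Hausdorff quotients are $p$-groups. Because at each finite stage the product over $p$ is finite, the inverse limit commutes with it — here one has to do the routine cofinality bookkeeping, replacing the indexing set by a cofinal subfamily if needed — so $G=\varprojlim_{N}\prod_{p}(G/N)_p\cong\prod_{p}\varprojlim_{N}(G/N)_p=\prod_{p}G_p$, and one checks that $G_p$ is indeed a $p$-Sylow subgroup of $G$ in the profinite sense.

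For part (ii) I would build the epimorphism explicitly. Writing $n_N:=|G/N|$, the surjection $\ZZ\to G/N$, $m\mapsto g^m$, factors through $\ZZ/n_N\ZZ$, which is a quotient of $\hat\ZZ=\varprojlim_{m}\ZZ/m\ZZ$. These maps are compatible as $N$ shrinks, so they assemble into a continuous homomorphism $\pi\co\hat\ZZ\to G$ with $\pi(1)=g$; it is surjective because its image is dense and, as the continuous image of a compact group, closed. For uniqueness I would interpret the statement as uniqueness of the closed subgroup $H\le\hat\ZZ$ with $\hat\ZZ/H\cong G$: using part (i) together with the classification of closed subgroups of $\hat\ZZ$ (equivalently of each $\ZZ_p$, whose only closed subgroups are $0$ and the $p^k\ZZ_p$), one recovers $H$ from the Sylow data of $G$. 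The pro-$p$ refinement then drops out two ways: directly, since if $G$ is pro-$p$ each $n_N$ is a power of $p$ so the factorization above runs through $\ZZ/p^{k_N}\ZZ=\ZZ_p/p^{k_N}\ZZ_p$ and the same assembly gives $\ZZ_p\twoheadrightarrow G$; or from part (i), since $\hat\ZZ=\prod_p\ZZ_p$ and a pro-$p$ procyclic group is its own $p$-Sylow factor, hence of the form $\ZZ_p$ or $\ZZ/p^k\ZZ$, in either case a quotient of $\ZZ_p$.

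I do not expect a genuine obstacle here: this is a standard structural fact, and the cited reference~\cite[2.7]{Procyclic} already contains it. The only points demanding real care are the cofinality argument needed to interchange $\varprojlim$ with the product of Sylow subgroups in part (i), and making the ``unique way'' in part (ii) precise — namely, that it refers to the uniqueness of the kernel $H\le\hat\ZZ$ (a supernatural number), a uniqueness which itself relies on part (i).
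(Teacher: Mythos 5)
Your proposal is correct. Note that the paper does not prove this proposition at all: it is quoted as a known fact with reference to \cite[2.7]{Procyclic}, so there is no internal proof to compare against. Your argument is precisely the standard one underlying that reference --- every finite quotient of $G=\overline{\la g\ra}$ is cyclic, the Sylow decomposition of finite cyclic groups is functorial and passes to the inverse limit, and the compatible surjections $\ZZ/n_N\ZZ\to G/N$ assemble into an epimorphism $\hat\ZZ\to G$ whose kernel is pinned down by the classification of closed subgroups of each $\ZZ_p$ --- and the two points you flag (the interchange of $\varprojlim$ with the product, and reading ``in a unique way'' as uniqueness of the kernel) are exactly the routine details one must supply.
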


\subsection{Divisible groups}
Recall that an element $g\in G$ is said to be {\bf $n$-divisible} for some $n\in\NN$ if there exists an $h\in G$ such that $h^n=g$. We say that $g$ is {\bf divisible} if it is $n$-divisible for each $n\geq 1$. The group $G$ is called {\bf $n$-divisible} (respectively {\bf divisible}) when all its elements are.

Now, every prime $q$ different from $p$ is invertible in $\ZZ_p$
since its $p$-adic valuation is zero. Hence, the additive group
$\ZZ_p$ is $q$-divisible for each $q\in\PP\setminus\{p\}$. In
particular, Proposition~\ref{prop structure procyclic} shows that if
a procyclic group $G$ has trivial $q$-Sylow subgroups, then $G$ is
$q$-divisible.

We conclude this paragraph by stating the following characterization
of divisibility for compact groups (see~\cite[Corollaire
2]{connexedivisible}).
\begin{prop}\label{prop compact iff divisible}
Let $G$ be a compact topological group. Then, $G$ is divisible if and only if it is connected.
\end{prop}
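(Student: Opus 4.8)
The plan is to establish the two implications separately; note that compactness is genuinely needed, as $\QQ$ (divisible but totally disconnected) shows that divisibility alone does not force connectedness.

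\emph{Divisible $\Rightarrow$ connected.} I would argue by contraposition. If $G$ is not connected, then, $G$ being compact, the quotient $G/G^0$ by the identity component $G^0$ is a non-trivial compact totally disconnected group (one checks that an extension of a connected group by a connected group is connected, so $(G/G^0)^0$ is trivial), hence profinite; it therefore admits a non-trivial finite continuous quotient. Composing with $G\twoheadrightarrow G/G^0$ yields a surjective homomorphism $\pi\colon G\twoheadrightarrow F$ with $1<|F|<\infty$. Since a quotient of a divisible group is again divisible (lift an element, extract an $n$-th root, push it back down), divisibility of $G$ would force divisibility of $F$. But with $n=|F|$, Lagrange's theorem gives $x^n=1$ for every $x\in F$, so the $n$-th power map on $F$ is constant; as $|F|>1$, $F$ is not $n$-divisible, a contradiction. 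Hence $G$ is connected.

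\emph{Connected $\Rightarrow$ divisible.} Fix $n\geq 1$ and let $p_n\colon G\to G$, $g\mapsto g^n$; the task is to show $p_n$ is onto. By the Peter--Weyl theorem, $G$ is the projective limit $G=\varprojlim_\alpha G/N_\alpha$, where $N_\alpha$ ranges over the closed normal subgroups with $G/N_\alpha$ a Lie group, and these form a neighbourhood basis of $1$. Each $G/N_\alpha$ is a continuous image of the connected group $G$, hence is a \emph{compact connected Lie group}, and the bonding maps $G/N_\beta\to G/N_\alpha$ are surjective. In a compact connected Lie group every element lies in a maximal torus $T\cong(\RR/\ZZ)^r$, and such a torus is plainly divisible; therefore $p_n$ is surjective on each $G/N_\alpha$. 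Since the power maps commute with the bonding maps, $p_n\colon G\to G$ is the projective limit of the surjections $p_n\colon G/N_\alpha\to G/N_\alpha$. Now, given $g\in G$, write $g_\alpha$ for its image in $G/N_\alpha$: the fibres $p_n^{-1}(g_\alpha)\subseteq G/N_\alpha$ are non-empty and compact, the bonding maps restrict to a projective system among them, and its inverse limit --- which is exactly $p_n^{-1}(g)\subseteq G$ --- is therefore non-empty. Hence $p_n$ is onto for every $n\geq1$, i.e. $G$ is divisible.

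The crux is the final step: surjectivity of maps is \emph{not} inherited by arbitrary projective limits, and one really needs compactness --- via the non-emptiness of an inverse limit of non-empty compact sets --- to produce an $n$-th root of a given $g$. The remaining ingredients (the profinite structure of $G/G^0$, and the maximal-torus/conjugacy theorem for compact connected Lie groups) are standard. As an alternative for the abelian part of the second implication, one could invoke Pontryagin duality: a compact abelian group $A$ is connected iff $\widehat A$ is torsion-free iff every map $n\colon A\to A$ is onto; the non-abelian case then follows by reducing to a maximal pro-torus, all such being conjugate in a compact connected group.
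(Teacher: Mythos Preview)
Your argument is correct. The paper itself does not prove this proposition: it simply quotes it from Mycielski's 1958 note (\cite{connexedivisible}, Corollaire~2), so there is no ``paper's own proof'' to compare against. What you have written is a clean self-contained treatment that could replace the bare citation.

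A couple of remarks on presentation. For the forward implication, your parenthetical about extensions is exactly the right reason that $G/G^0$ is totally disconnected, but you might state it more directly: the preimage in $G$ of the identity component of $G/G^0$ is an extension of $G^0$ by a connected group, hence connected, hence equal to $G^0$. For the converse, the decisive point you isolate---that surjectivity need not pass to projective limits, and that compactness of the fibres $p_n^{-1}(g_\alpha)$ is what rescues the argument---is precisely the content of Mycielski's original proof, so your approach is in fact close in spirit to the cited source. Your alternative via Pontryagin duality and maximal pro-tori is also valid, though it invokes structure theory at least as heavy as the Peter--Weyl reduction you use in the main line.
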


\subsection{Semi-simple actions on \cat spaces}
Let $G$ be a group acting on a metric space $(X,d)$. For every $g\in G$, we define the {\bf translation length} of $g$ by $|g|:=\inf\{d(x,g\cdot x) \ | \ x\in X\}\in [0,\infty)$ and the {\bf minimal set} of $g$ by $\Min(g):=\{x\in X \ | \ d(x,g\cdot x)=|g|\}$. An element $g\in G$ is said to be {\bf semi-simple} when $\Min(g)$ is nonempty. In that case, we say that $g$ is {\bf elliptic} if it fixes some point, that is, if $|g|=0$; otherwise, if $|g|>0$, we call $g$ {\bf hyperbolic}.

A {\bf geodesic line} (respectively, {\bf geodesic segment}) in $X$ is an isometry $f\co\RR\to X$ (respectively, $f\co [0;1]\to X$); by abuse of language, we will identify $f$ with its image in $X$.

The following lemma follows from Proposition 2.4
in~\cite{BridsonHaefliger}.
\begin{lemma}\label{lemma existence projection}
Let $(X,d)$ be a complete $\CAT$ metric space, and let $C$ be a closed convex nonempty subset of $X$. Then:
\begin{enumerate}[(i)]
\item
For every $x\in X$, there is a unique $y\in C$ such that $d(x,y)=d(x,C)$, where $d(x,C):=\inf_{z\in C}{d(x,z)}$. We call $y$ the {\bf projection} of $x$ on $C$ and we write $y=\proj_Cx$.
\item
For all $x_1,x_2\in X$, we have $d(\proj_Cx_1,\proj_Cx_2)\leq d(x_1,x_2)$.
\end{enumerate}
\end{lemma}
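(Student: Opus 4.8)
The plan is to run the standard argument for nearest-point retractions onto closed convex subsets of $\CAT$ spaces, whose two ingredients are the Bruhat--Tits (CN) inequality and the first-variation estimate for the distance function, both of which are elementary consequences of the $\CAT$ condition. Throughout, $X$ is a complete $\CAT$ space and $C\subseteq X$ is closed, convex and nonempty. I will use the CN inequality in its general form: if $c\co[0,1]\to X$ is the geodesic from $y_0$ to $y_1$, then for every $x\in X$ and every $t\in[0,1]$,
$$d(x,c(t))^2\leq (1-t)\,d(x,y_0)^2+t\,d(x,y_1)^2-t(1-t)\,d(y_0,y_1)^2.$$

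For part (i), fix $x\in X$, set $\rho:=d(x,C)$, and choose a minimizing sequence $(z_n)$ in $C$ with $d(x,z_n)\to\rho$. For any $m,n$ the midpoint $w$ of the geodesic $[z_m,z_n]$ lies in $C$ by convexity, hence $d(x,w)\geq\rho$; plugging this into the CN inequality with $t=1/2$ gives
$$d(z_m,z_n)^2\leq 2\,d(x,z_m)^2+2\,d(x,z_n)^2-4\rho^2\longrightarrow 0\qquad(m,n\to\infty),$$
so $(z_n)$ is a Cauchy sequence. By completeness of $X$ it converges to a point $y$, which lies in $C$ because $C$ is closed and which satisfies $d(x,y)=\rho=d(x,C)$; this is the projection $\proj_Cx$. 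Uniqueness follows from the same computation: if $y,y'\in C$ both realize the distance $\rho$, then the midpoint of $[y,y']$ lies in $C$ and the CN inequality forces $d(y,y')^2\leq 0$.

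For part (ii), write $p:=\proj_Cx_1$ and $q:=\proj_Cx_2$ and assume $p\neq q$. The first step is the obtuse-projection property: the Alexandrov angle at $p$ between $[p,x_1]$ and $[p,q]$ is at least $\pi/2$, and symmetrically at $q$. I would obtain this from the first-variation formula --- were $\angle_p(x_1,q)<\pi/2$, then points of $[p,q]$ near $p$, which lie in $C$ by convexity, would be strictly closer to $x_1$ than $p$ is, contradicting $p=\proj_Cx_1$. (Equivalently, applying the general CN inequality to $x_1$ and the geodesic $c$ from $p$ to $q$, together with $d(x_1,p)\leq d(x_1,c(t))$ and $t\to 0^+$, yields $d(x_1,q)^2\geq d(x_1,p)^2+d(p,q)^2$, and symmetrically $d(x_2,p)^2\geq d(x_2,q)^2+d(p,q)^2$.) The second step is to deduce $d(p,q)\leq d(x_1,x_2)$. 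Geometrically: form the Euclidean comparison triangles of $\triangle(x_1,p,q)$ and of $\triangle(x_2,p,q)$ and glue them along the common edge $[\bar p,\bar q]$ with $\bar x_1$ and $\bar x_2$ on opposite sides; since in a $\CAT$ space Alexandrov angles do not exceed the corresponding comparison angles, the resulting planar quadrilateral has angles at least $\pi/2$ at both $\bar p$ and $\bar q$, whence $d(\bar x_1,\bar x_2)\geq d(\bar p,\bar q)=d(p,q)$ by elementary planar geometry, and $d(x_1,x_2)\geq d(\bar x_1,\bar x_2)$ by the gluing lemma for comparison triangles. Alternatively one finishes algebraically: adding the two squared-distance inequalities of the first step and combining with the four-point (quasilinearization) inequality valid in $\CAT$ spaces leaves exactly $d(p,q)^2\leq d(x_1,x_2)^2$.

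The minimizing-sequence argument and the uniqueness assertion are routine. The step I expect to require the most care is the second step of part (ii): the planar estimate itself is immediate, but rigorously justifying $d(x_1,x_2)\geq d(\bar x_1,\bar x_2)$ for the glued comparison configuration --- an instance of Alexandrov's lemma / Reshetnyak majorization --- is the one technical point, for which the algebraic route substitutes the (equally standard) four-point characterization of the $\CAT$ condition. In either form, the statement is precisely Proposition~2.4 of~\cite{BridsonHaefliger}.
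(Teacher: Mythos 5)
Your argument is correct, but it takes a genuinely different route from the paper: the paper offers no proof at all for this lemma, simply observing that it follows from Proposition~2.4 in \cite{BridsonHaefliger} (the statement \emph{is} that proposition), whereas you reconstruct the standard proof. Your part (i) and the obtuse-projection inequalities $d(x_1,q)^2\geq d(x_1,p)^2+d(p,q)^2$, $d(x_2,p)^2\geq d(x_2,q)^2+d(p,q)^2$ (via the CN inequality and $t\to 0^+$) are exactly the textbook argument and are airtight. For the final step of (ii), the route that is fully rigorous as written is your algebraic one: with the quasilinearization (Berg--Nikolaev) inequality $\tfrac12\bigl(d(p,x_2)^2+d(q,x_1)^2-d(p,x_1)^2-d(q,x_2)^2\bigr)\leq d(p,q)\,d(x_1,x_2)$, your two inequalities give $d(p,q)^2\leq d(p,q)\,d(x_1,x_2)$, hence $d(p,q)\leq d(x_1,x_2)$ (note it is this product bound, not $d(p,q)^2\leq d(x_1,x_2)^2$ directly, that falls out; also the additive ``sum of squared diagonals'' form of the four-point inequality would not suffice, so the Cauchy--Schwarz form you name is the one you need). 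Your geometric route is fine in spirit but, as you acknowledge, the inequality $d(x_1,x_2)\geq d(\bar x_1,\bar x_2)$ for the glued comparison figure is not a generic ``gluing lemma'': without the right-angle conditions it is simply false (two nearby points on the same side of $[p,q]$ get reflected far apart), so it genuinely requires Alexandrov's lemma/Reshetnyak-type majorization, i.e.\ it carries the real content of the step. A lighter, self-contained alternative---essentially the one in \cite{BridsonHaefliger}---is to use convexity of $t\mapsto d(c_1(t),c_2(t))$ for the geodesics $c_1$ from $p$ to $x_1$ and $c_2$ from $q$ to $x_2$, together with the two angle conditions via first variation, which avoids invoking the quasilinearization theorem. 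In short: no gap, a complete standard proof where the paper merely cites, at the modest cost of importing one citable fact about $\CAT$ spaces.
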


Suppose now that $(X,d)$ is a cell complex. We then say that $G$
acts by {\bf cellular isometries} on $X$ if it preserves the metric,
as well as the cell decomposition of $X$.

The following result is due to Martin Bridson~\cite{Bridson}.
\begin{prop}\label{thm Bridson}
Let $X$ be a locally Euclidean $\CAT$ cell complex with finitely many isometry types of cells, and  $G$ be a group acting on $X$ by cellular isometries. Then every element of $G$ is semi-simple. Moreover, $\inf\{|g|\neq 0 \ | \ g\in G\}>0$.
\end{prop}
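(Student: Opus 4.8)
This is Bridson's theorem; the plan is to follow his argument \cite{Bridson}. The point is that, because $X$ has only finitely many isometry types of cells, the local combinatorics of a geodesic segment of bounded length is rigid enough that the infimum defining a translation length becomes the minimum of a continuous function over a compact parameter space.

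The key preliminary step, and the one I expect to be the main obstacle, is a purely combinatorial statement: for each $D>0$ there is an integer $N=N(D)$ such that any geodesic segment of length $\le D$ in $X$ meets at most $N$ closed cells, and moreover the \emph{pattern} of such a segment --- the ordered list of cells it crosses, recorded with their metric shapes and with the isometric gluings of consecutive cells along shared faces, together with the positions of the two endpoints and, for a pair $(x,gx)$, the restriction of $g$ to the closed cell carrying $x$ (a local isometry onto the closed cell carrying $gx$) --- ranges over only \emph{finitely many} isometry types $\mathcal{S}_D$. The difficulty is that $X$ need not be locally finite (a vertex may belong to infinitely many cells), so one cannot simply invoke compactness of links; the resolution, which is the real content of \cite{Bridson}, is that finiteness of shapes forces uniform lower bounds on the relevant metric invariants of the (CAT$(1)$) links, controlling how a local geodesic can pass from one cell to the next. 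Granting this, each model pattern $Y\in\mathcal{S}_D$ is a compact metric space carrying a model isometry $\psi\colon\sigma\to\tau$ between two of its cells, together with a $1$-Lipschitz ``realization'' map $\iota\colon Y\to X$ onto the cells actually crossed, satisfying $\iota\circ\psi=g\circ\iota$ on $\sigma$ and $d_X(x,gx)=d_Y(a,b)$ for the images $x,gx$ of the marked points $a,b$.

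Semisimplicity is then formal. Given $g\in G$, put $D=|g|+1$, choose $x_n$ with $d(x_n,gx_n)\le D$ and $d(x_n,gx_n)\to|g|$, and pass to a subsequence along which the pattern of $(x_n,gx_n)$ is a fixed $(Y,\psi\colon\sigma\to\tau)$, with marked points $a_n\in\sigma$ and $\psi(a_n)\in\tau$. The continuous function $p\mapsto d_Y(p,\psi(p))$ attains its minimum $\mu$ on the compact cell $\sigma$, say at $p^*$; since $\mu\le d_Y(a_n,\psi(a_n))=d_X(x_n,gx_n)\to|g|$, we get $\mu\le|g|$. Picking a single $n$ with its realization $\iota$, the point $x^*:=\iota(p^*)$ satisfies $gx^*=\iota(\psi(p^*))$, whence $d_X(x^*,gx^*)\le d_Y(p^*,\psi(p^*))=\mu\le|g|$ by $1$-Lipschitzness; as $d_X(x^*,gx^*)\ge|g|$ always, equality holds and $x^*\in\Min(g)$, so $g$ is semi-simple.

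For the uniform lower bound I would argue by contradiction. Fix $\epsilon>0$ for which $\mathcal{S}_\epsilon$ is finite, and suppose there are $g_n\in G$ with $0<|g_n|\to0$. For $n$ large one has $|g_n|<\epsilon$, and by the previous step $g_n$ is semi-simple, so pick $x_n\in\Min(g_n)$ with $d(x_n,g_nx_n)=|g_n|$. Passing to a subsequence, the pattern of $(x_n,g_nx_n)$ is a fixed $(Y,\psi\colon\sigma\to\tau)$ with $d_Y(a_n,\psi(a_n))=|g_n|\to0$; by compactness of $\sigma$ we may assume $a_n\to a^*\in\sigma$, and continuity of $\psi$ and of $d_Y$ gives $d_Y(a^*,\psi(a^*))=0$, i.e. $\psi(a^*)=a^*$. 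But then, for a realization $\iota$, we obtain $g_n\bigl(\iota(a^*)\bigr)=\iota\bigl(\psi(a^*)\bigr)=\iota(a^*)$, so $g_n$ fixes a point of $X$, contradicting $|g_n|>0$. Hence $\inf\{|g|\neq0\mid g\in G\}>0$, which completes the plan.
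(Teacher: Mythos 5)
The paper does not prove this proposition at all: it is quoted verbatim from Bridson's article \cite{Bridson} (and used as a black box in the proof of Theorem~\ref{thm compact elliptic}), so there is no in-paper argument to compare yours with; what you have written is a reconstruction of the cited proof. The ``soft'' part of your reconstruction is assembled correctly: once one has, for each $D$, a finite list of compact marked models $(Y,\sigma,\tau,\psi)$ together with $1$-Lipschitz realizations $\iota$ satisfying $\iota\circ\psi=g\circ\iota$ on $\sigma$ and preserving the endpoint distance, then minimizing the continuous function $p\mapsto d_Y(p,\psi(p))$ over the compact cell $\sigma$ does give a point of $\Min(g)$, and the limiting argument with $|g_n|\to 0$ does produce a fixed point of some $g_n$, proving both assertions.

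The genuine gap is that this key input --- the bound $N(D)$ on the number of closed cells met by a geodesic of length at most $D$, the finiteness of the set $\mathcal{S}_D$ of gluing patterns, and the existence of realization maps with the stated intertwining and distance properties --- is exactly the substance of Bridson's theorem, and you explicitly grant it (``Granting this\dots''). In a complex that is not locally finite this is where all the work lies (uniform $\epsilon$-neighbourhood and link estimates coming from the finitely-many-shapes hypothesis), and without it your argument proves nothing beyond routine compactness; so as a self-contained proof the proposal is incomplete at its core step, though it is an accurate account of the strategy of \cite{Bridson}. A smaller point of precision: as stated, your finiteness claim cannot include ``the positions of the two endpoints'' in the isometry type of a pattern --- those vary continuously, so there would be a continuum of types; what your argument actually uses, and what the lemma should assert, is finitely many models with distinguished cells $\sigma,\tau$ and the local isometry $\psi$ (finitely many choices since each shape has a finite isometry group and finitely many faces), with the marked points ranging over the compact cell $\sigma$.
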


We now establish the following result, which is the key ingredient for the proof of Theorem 1:

\begin{thm}\label{thm compact elliptic}
Let $X$ be a locally Euclidean $\CAT$ cell complex with finitely many isometry types of cells, and  $G$ be a compact group acting on $X$ by cellular isometries (not necessarily continuously). Then every element of $G$ is elliptic.
\end{thm}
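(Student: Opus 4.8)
The plan is to argue by contradiction, combining Bridson's theorem (Proposition~\ref{thm Bridson}) with the structure theory of compact and procyclic groups; the upshot will be a surjection of a compact group onto $\ZZ$, which is impossible. Note first that having finitely many isometry types of cells forces $\dim X<\infty$, so Proposition~\ref{thm Bridson} applies: every element of $G$ is semi-simple and $\epsilon:=\inf\{\,|g|\neq0\mid g\in G\,\}>0$. Suppose for contradiction that $g\in G$ is hyperbolic, say $|g|=\ell>0$, and set $Q:=\overline{\la g\ra}\leq G$; since $G$ is compact, $Q$ is a compact abelian monothetic group (no continuity of the action is needed for this). I will produce a surjective homomorphism $Q\twoheadrightarrow\ZZ$.

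To build it, I would use the geometry of commuting semi-simple isometries. Every $h\in Q$ commutes with $g$, hence preserves $\Min(g)$, which by \cite[II.6.8]{BridsonHaefliger} splits isometrically as $\Min(g)\cong Y\times\RR$ with $g$ acting as $(\id_Y,\,t\mapsto t+\ell)$; commuting with the translation $g$, the element $h$ respects this splitting and acts as $h=(h_Y,\,t\mapsto t+\tau(h))$, so $h\mapsto\tau(h)$ is a homomorphism $\tau\co Q\to\RR$ with $\tau(g)=\ell\neq0$. Denoting by $\pi$ the ($1$-Lipschitz, $Q$-equivariant) projection onto the closed convex set $\Min(g)$ from Lemma~\ref{lemma existence projection}, one gets $d(x,hx)\geq d(\pi x,h\,\pi x)\geq|\tau(h)|$ for all $x$, hence $|h|\geq|\tau(h)|$; and if $h_Y$ is elliptic, fixing some $y_0\in Y$, then $\{y_0\}\times\RR$ is an axis of $h$, so $|h|=|\tau(h)|$. (One also checks that $\tau(h)\neq0$ forces $h$ to be hyperbolic: an elliptic $h$ would have $\pi$ send a fixed point of $h$ into $\Fix(h)\cap\Min(g)$, which under the splitting is $\Fix(h_Y)\times\{t\mid t+\tau(h)=t\}$, hence empty when $\tau(h)\neq0$.)

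The crux is to show that $\tau(Q)$ is a discrete subgroup of $\RR$, and the hard part is exactly here. I would do it by proving, by induction on $\dim X$, that $h_Y$ is elliptic for every $h\in Q$. The base case $\dim X=0$ is vacuous, a $0$-dimensional $\CAT$ space being a single point. For the inductive step, one equips $\Min(g)\cong Y\times\RR$ with a structure of locally Euclidean $\CAT$ cell complex with finitely many isometry types of cells on which $Q$ acts by cellular isometries; then $Y$ is such a complex, of dimension $<\dim X$, carrying the cellular action $h\mapsto h_Y$ of the compact group $Q$, and the induction hypothesis gives the claim. Granting it, $|h|=|\tau(h)|$ for all $h\in Q$, so Proposition~\ref{thm Bridson} yields $\{\,|\tau(h)|\neq0\mid h\in Q\,\}\subseteq[\epsilon,\infty)$; thus $\tau(Q)$ is discrete and nontrivial, say $\tau(Q)=\ell_0\ZZ$ with $\ell_0\geq\epsilon$, and $\tau$ provides the surjection $Q\twoheadrightarrow\ZZ$.

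Finally I would rule this out. The identity component $Q^\circ$ is compact connected, hence divisible by Proposition~\ref{prop compact iff divisible}, so $\tau(Q^\circ)$ is a divisible subgroup of $\ZZ$ and therefore $\{0\}$; hence $\tau$ factors through the monothetic totally disconnected compact --- i.e.\ procyclic --- group $Q/Q^\circ$, which by Proposition~\ref{prop structure procyclic}(ii) is a quotient of $\hat\ZZ=\prod_{p\in\PP}\ZZ_p$. It thus suffices to show that every homomorphism $\varphi\co\hat\ZZ\to\ZZ$ is trivial: since $\prod_{p\neq2}\ZZ_p$ is $2$-divisible and $\ZZ_2$ is $3$-divisible, while the only $2$- (resp.\ $3$-)divisible subgroup of $\ZZ$ is $\{0\}$, the map $\varphi$ kills $\prod_{p\neq2}\ZZ_p$, hence factors through $\ZZ_2$, hence is trivial. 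This contradiction shows that $g$ is elliptic. The one genuinely delicate point is the inductive reduction of the previous paragraph --- putting on $\Min(g)$ a locally Euclidean cell structure of the required kind that is compatible with the $Q$-action; the rest is routine $\CAT$ geometry together with the quoted group-theoretic facts.
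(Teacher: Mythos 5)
Your overall architecture (restrict to $\Min(g)\cong Y\times\RR$, read off a homomorphism $\tau\co Q\to\RR$, show its image is discrete, then kill it by divisibility of $Q^0$ and procyclicity of $Q/Q^0$) is coherent, and the endgame is fine: homomorphisms from $\hat\ZZ$ to $\ZZ$ are indeed trivial. But the step you yourself flag as ``delicate'' is a genuine gap, and it is not a technicality --- it is the whole difficulty. Your induction needs $\Min(g)$, and then the factor $Y$, to be a locally Euclidean $\CAT$ cell complex with finitely many isometry types of cells on which $Q$ acts by \emph{cellular} isometries. However, $\Min(g)$ is only a closed convex subset of $X$; it is in general not a subcomplex, convex subsets of polyhedral complexes need not admit any compatible polyhedral structure with finitely many shapes, the translation direction $\RR$ of the splitting need not be aligned with any cells, and $Y$ is again just a convex subset, not a complex. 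Without such a structure you can apply neither Proposition~\ref{thm Bridson} nor your induction hypothesis to the action $h\mapsto h_Y$, so the ellipticity of $h_Y$ --- hence the equality $|h|=|\tau(h)|$ and the discreteness of $\tau(Q)$ --- is unproved. And the discreteness cannot be dropped: compact groups do admit nontrivial \emph{abstract} homomorphisms to $\RR$ (e.g.\ $\ZZ_p\hookrightarrow\QQ_p\to\RR$, embedding $\QQ_p$ into $\RR$ as $\QQ$-vector spaces), so without it the final ``compact cannot surject onto a subgroup of $\RR$'' contradiction evaporates. The inequality $|h|\geq|\tau(h)|$ alone does not help, since Bridson's bound $|h|\geq\epsilon$ is compatible with $\tau(h)$ being arbitrarily small while $|h_Y|$ stays large.

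It is worth comparing with how the paper sidesteps exactly this obstruction: it never uses the splitting $\Min(g)\cong Y\times\RR$ and never needs an induced cell structure on $\Min(g)$. Instead it works purely with translation lengths on the ambient complex: projections onto invariant closed convex sets show that minimal sets of elements of $Q$ intersect and that translation lengths are computed inside them; this yields $|h^n|=n|h|$, hence that divisible elements (in particular all of the connected, hence divisible, group $Q^0$) are elliptic, and more generally that a factorization $ha=d^n$ with $a$ elliptic forces $|d|=|h|/n$. Then the procyclic component group $Q/Q^0$ and its Sylow decomposition are used to produce, for every prime $q$, such a factorization of $g$ (up to elliptic error) as a $q$-th power, giving elements of translation length $|g|/q$ for unboundedly many $q$ and contradicting the discreteness part of Proposition~\ref{thm Bridson} directly. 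If you want to salvage your route, you would have to either prove the polyhedrality/cocompactness statement for $\Min(g)$ and $Y$ (which is not available in this generality) or replace the geometric control of $h_Y$ by an algebraic divisibility argument --- at which point you essentially recover the paper's proof.
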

\begin{proof}
Suppose for a contradiction there exists a $g\in G$ without fixed point. Proposition~\ref{thm Bridson} then implies that $g$ is hyperbolic. Let $Q=\overline{\la g \ra}$ be the closure of the subgroup generated by $g$ in $G$. So, $Q$ is compact.

\medskip \noindent
\underline{Claim 1}: \emph{ $Q$ is Abelian.}

\medskip \noindent This is clear since it
contains a dense Abelian (in fact cyclic) subgroup.

\medskip \noindent
\underline{Claim 2}: \emph{For every $h\in Q$, the minimal set $\Min(h)$ is a closed convex subset of $X$ which is stabilized by $Q$.}

\medskip \noindent
This follows from~\cite[Proposition II.6.2]{BridsonHaefliger}.

\medskip \noindent
\underline{Claim 3}: \emph{For every $h\in Q$ and every nonempty
closed convex subset $C$ of $X$ stabilized by $Q$, the set
$C\cap\Min(h)$ is nonempty.}

\medskip \noindent
Note first that $\Min(h)$ is nonempty
by Proposition~\ref{thm Bridson}. Let $x\in \Min(h)$ and consider
the projections $y:=\proj_Cx$ and $z:=\proj_Chx$ provided by
Lemma~\ref{lemma existence projection}. Since $hC=C$, we then obtain
$$d(x,y)=\inf_{c\in C}d(x,c)=\inf_{c\in C}d(hx,hc)=\inf_{c\in C}d(hx,c)=d(hx,z).$$
Hence $d(hx,hy)=d(x,y)=d(hx,z)$, and so $z=hy=\proj_Chx$ by uniqueness
of projections. Since in addition $d(y,z)\leq d(x,hx)=|h|$ by
Lemma~\ref{lemma existence projection}, we finally get
$d(y,hy)=|h|$ and therefore $y\in C\cap\Min(h)$.

\medskip \noindent
\underline{Claim 4}: \emph{For all $h_1,h_2\in Q$, the set
$\Min(h_1)\cap\Min(h_2)$ is nonempty.}

\medskip \noindent
As $\Min(h_1)$ and $\Min(h_2)$ are nonempty by Proposition~\ref{thm
Bridson}, the claim follows from Claims~2 and~3.

\medskip \noindent
\underline{Claim 5}: \emph{Let $h\in Q$ and let $C$ be a nonempty
closed convex subset of $X$ stabilized by $Q$. We may thus consider
the action of $h$ on $C$. Denote by $|h|_C$ the translation length
of $h$ for this action. Then, $h$ is semi-simple in $C$ and
$|h|=|h|_C$.}

\medskip \noindent
Claim 3 yields that if $x\in \Min(h)$, then $y:=\proj_Cx\in\Min(h)$.
Since $\Min(h)$ is nonempty by Proposition~\ref{thm Bridson}, the
claim follows.

\medskip \noindent
\underline{Claim 6}: \emph{For every $h\in Q$ and $n\geq 1$, we have
$|h^n|=n|h|$.}

\medskip \noindent
By Claim 4, we may choose an $x\in \Min(h)\cap\Min(h^n)$. Note that
$h$ is elliptic (respectively hyperbolic) if and only if $h^n$ is so (see
\cite[II.6.7 and II.6.8]{BridsonHaefliger}). In particular, if $h$
is hyperbolic, then $x$ belongs to some $h$-axis, which is also an
$h^n$-axis. In any case, we obtain  $d(x,h^nx)=nd(x,hx)$, whence
$|h^n|=d(x,h^nx)=nd(x,hx)=n|h|$.

\medskip \noindent
\underline{Claim 7}: \emph{Every divisible element of $Q$ is
elliptic.}

\medskip \noindent
Let $h\in Q$ be divisible and suppose for a contradiction it is not
elliptic. Then $h$ is hyperbolic by Proposition~\ref{thm Bridson}.
For each natural number $n\geq 1$, choose an $h_n\in Q$ such that
$h_n^n=h$. In particular, all $h_n$ are hyperbolic. Moreover,
$|h_n^n|=n|h_n|$ by Claim 6. Therefore, we obtain a sequence $(h_n)$
of elements of $Q$ such that $|h_n|=|h|/n>0$, contradicting the
second part of Proposition~\ref{thm Bridson}.

We now establish the desired contradiction to the hyperbolicity of
$g$. First note that the component group $P:=Q/Q^0$ of $Q$ is a
profinite group. In fact, it is even procyclic, since the subgroup
generated by the projection of $g$ in $P$ is dense in $P$, the
natural mapping $\pi\co Q\to Q/Q^0$ being continuous. In particular,
it follows from Proposition~\ref{prop structure procyclic} that $P$
is the product of its $p$-Sylow subgroups $P_p$. Moreover, each
$P_p$ is a pro-$p$ group and is therefore $q$-divisible for every
$q\in\PP\setminus\{p\}$. For each $p\in\PP$, let $Q_p$ be the
subgroup of $Q$ which is the pre-image of $P_p$ under $\pi$.

\medskip \noindent
\underline{Claim 8}: \emph{If $h,a,d\in Q$ with $ha=d^n$ for some
$n\geq 1$ and $a$ is elliptic, then $|h|=n|d|$.}

\medskip \noindent
Write $C:=\Min(h)\cap\Min(a)$. Then $C$ is nonempty by Claim 4.
Since $d^n$ stabilizes $C$, Claim 5 implies that it is semi-simple
in $C$ with translation length $|d^n|_C=|d^n|$. Thus,
$|d^n|_C=|d^n|=n|d|$ by Claim 6. Note also that $ha$ is semi-simple
in $C$ with translation length $|ha|_C=|h|$. Therefore,
$|h|=|ha|_C=|d^n|_C=n|d|$, as desired.

\medskip \noindent
\underline{Claim 9}: \emph{Let $h\in Q$ be hyperbolic. Suppose that
$ha_i=d_i^{n_i}$ for all $i\geq 1$, where $a_i,d_i\in Q$, each $a_i$
is elliptic and where $n_i\geq 1$. Then the set $\{n_i \ | \ i\geq
1\}$ is bounded.}

\medskip \noindent
Indeed, by Claim 8, the sequence $(d_i)$ of elements of $Q$ is such
that $|d_i|=|h|/n_i>0$. The claim now follows from the second part
of Proposition~\ref{thm Bridson}.

\medskip \noindent
\underline{Claim 10}: \emph{Let $p\in\PP$. Then all elements of
$Q_p$ are elliptic.}

\medskip \noindent
Suppose for a contradiction there exists an $h\in Q_p$ which is not
elliptic, and is thus hyperbolic by Proposition~\ref{thm Bridson}.
Let $q\in\PP\setminus\{p\}$. Since $P_p=\pi(Q_p)$ is $q$-divisible,
there exists an $h_q\in Q$ such that $h_q^qQ^0=hQ^0$. Let $a\in Q^0$
such that $ha=h_q^q$. By Proposition~\ref{prop compact iff
divisible}, since $Q^0$ is compact and connected, it is divisible,
and so $a$ is elliptic by Claim 7. Since the set of natural prime
numbers distinct from $p$ is unbounded, the desired contradiction
now comes from Claim 9.

\medskip
Let now $gQ^0=(g_p)_{p\in\PP}$ be the decomposition of $\pi(g)$ in
$P=\prod_{p\in\PP}{P_p}$ (that is, each $g_p\in P_p$). Let
$q\in\PP$, and choose an $a_q\in Q_p$ such that $\pi(a_q)=g_q\inv$.
Then $\pi(ga_q)$ has no component in the $q$-Sylow of $P$, and is
therefore $q$-divisible in $P$. Hence, there exist an $h_q\in Q$ and
an $a\in Q^0$ such that $ga_qa=h_q^q$. By Claim 10, we know that
$a_q$ is elliptic. But so is $a$, and hence the product $a':=a_qa$
is also elliptic by Claim 4. Since $q$ is an arbitrary prime, Claim
9 again yields the desired contradiction.
\end{proof}

\subsection{The Davis realization of a building}
We recall from~\cite{Davis} that any building $\Delta$ admits a
metric realization, denoted by $|\Delta|_{\CAT}$, which is a locally Euclidean $\CAT$
cell complex with finitely many types of cells. Moreover any group
of type-preserving automorphisms of $\Delta$ acts in a canonical way
by cellular isometries on $|\Delta|_{\CAT}$. Finally, the cell
supporting any point of $|\Delta|_{\CAT}$ determines a unique
spherical residue of $\Delta$. In particular, an automorphism of
$\Delta$ which fixes a point in $|\Delta|_{\CAT}$ must stabilize the
corresponding spherical residue in $\Delta$.

Here is a reformulation of Theorem~\ref{1}.

\begin{thm}
Let $G$ be a compact group acting strongly transitively by type-preserving automorphisms on a thick building $\Delta$. Then, $\Delta$ is spherical.
\end{thm}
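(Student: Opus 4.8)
The plan is to deduce this reformulated statement directly from Theorem~\ref{thm compact elliptic} together with the structural facts recalled about the Davis realization. Given a compact group $G$ acting strongly transitively by type-preserving automorphisms on a thick building $\Delta$, I would first pass to the Davis realization $|\Delta|_{\CAT}$, which by the previous subsection is a locally Euclidean $\CAT$ cell complex with finitely many isometry types of cells, on which $G$ acts by cellular isometries. Theorem~\ref{thm compact elliptic} then applies verbatim: every element $g \in G$ is elliptic, i.e. fixes a point of $|\Delta|_{\CAT}$.

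The next step is to upgrade "every element has a fixed point" to "$G$ has a global fixed point". Here I would invoke a Bruhat--Tits style fixed point argument: a group acting on a complete $\CAT$ space with a \emph{bounded} orbit has a fixed point (the circumcenter). Since $G$ is compact, I would want its orbits on $|\Delta|_{\CAT}$ to be bounded; this follows once we know any single orbit is bounded, which in turn follows from ellipticity of each element plus compactness --- concretely, fix a point $x$, let $F = \Fix(g)$ for a generator-type element, or more robustly: the orbit map is built from cellular isometries, and the stabilizer of a point is a subgroup; one shows the orbit $G\cdot x$ lies in a bounded set by using that the action is cellular and $G$ compact, so that only finitely many cell-types occur and a compactness/covering argument bounds the orbit. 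Alternatively, and more cleanly, one applies the fixed point property to the closure of a $G$-orbit. Having a global fixed point $p \in |\Delta|_{\CAT}$, the last recalled fact says that the cell supporting $p$ determines a spherical residue $R$ of $\Delta$ which must be stabilized by $G$.

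Finally I would use strong transitivity to conclude $R = \Delta$, so that $\Delta$ is itself spherical. Indeed, a strongly transitive action is in particular chamber-transitive (even transitive on pairs (chamber, apartment containing it)); so $G$ acts transitively on the chambers of $\Delta$. But $G$ stabilizes the spherical residue $R$, hence $G$ permutes the chambers contained in $R$ and the chambers not in $R$ separately --- which forces every chamber to lie in $R$, i.e. $R$ is the whole building. A spherical residue that equals the entire building means the Weyl group is finite, i.e. $\Delta$ is spherical, which is the desired conclusion; unwinding, this is exactly the statement that the Coxeter complex / the BN-pair is spherical in Theorem~\ref{1}.

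The main obstacle I anticipate is the passage from pointwise ellipticity to a global fixed point: Theorem~\ref{thm compact elliptic} only guarantees that each individual element fixes a point, not that the group does, and $G$ is not assumed to act continuously, so one cannot naively invoke compactness of $G$ to get a bounded orbit. The resolution should exploit that the action is \emph{cellular} on a complex with finitely many isometry types of cells: the stabilizer of a point is an honest subgroup, orbits of cells are finite-to-one onto isometry types, and combined with the $\CAT$ geometry (e.g. via the Bruhat--Tits fixed point lemma applied to the bounded orbit, or via a direct argument on stabilizers of residues) one extracts the global fixed point. I would make sure this step is spelled out carefully, since it is where the "abstract" (non-continuous) nature of the BN-pair could otherwise cause trouble.
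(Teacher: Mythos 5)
Your first step (applying Theorem~\ref{thm compact elliptic} to the Davis realization) and your last step (a $G$-stabilized spherical residue plus chamber-transitivity forces $W$ to be finite) are both fine, but the middle step is a genuine gap, and it is the entire content of the problem. From ``every element of $G$ is elliptic'' you cannot pass to ``$G$ has a bounded orbit, hence a global fixed point'': for a group acting by cellular isometries on a complete $\CAT$ complex, pointwise ellipticity does not imply bounded orbits (infinite torsion groups, e.g.\ locally finite groups acting on trees, have every element elliptic yet unbounded orbits and no fixed point). Compactness of $G$ gives you nothing here, precisely because the action is \emph{not} assumed continuous --- the orbit of a point is just an abstract orbit, its closure in $|\Delta|_{\CAT}$ need not be bounded, and ``finitely many isometry types of cells'' bounds the geometry of single cells, not the diameter of an orbit. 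The suggested ``compactness/covering argument'' does not exist in this generality. Note also that for a strongly transitive action a global fixed point is \emph{equivalent} to $\Delta$ being spherical (if $\Delta$ is spherical, $|\Delta|_{\CAT}$ is bounded and the circumcenter is fixed; conversely your own last paragraph), so this step is not a technical upgrade but the theorem itself.

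The paper avoids this issue entirely: it never seeks a global fixed point for $G$. Instead it restricts attention to the stabilizer of the fundamental apartment $\Sigma$, whose action on $\Sigma$ is identified with the $W$-action. It shows $|\Sigma|_{\CAT}$ is a closed convex subset of $|\Delta|_{\CAT}$ (via retractions), so that by the compatibility of translation lengths on closed convex invariant subsets (Claim~5 in the proof of Theorem~\ref{thm compact elliptic}) every $w\in W$ is elliptic on $|\Sigma|_{\CAT}$. Since the $W$-action on $|\Sigma|_{\CAT}$ is proper (point stabilizers are finite parabolic subgroups of $W$), elliptic elements of $W$ have finite order; as every infinite finitely generated Coxeter group contains an element of infinite order, $W$ must be finite. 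If you want to salvage your outline, you would have to supply a genuinely new argument for bounded orbits of an abstract action of a compact group all of whose elements are elliptic; absent that, the apartment-restriction argument is the way to close the gap.
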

\begin{proof}
Let $(W,S)$ be the Coxeter system associated to $\Delta$, and let $\Sigma$ be the fundamental apartment of $\Delta$. Then, the action of the stabilizer in $G$ of $\Sigma$ can be identified with the action of $W$ on this apartment (\cite[2.8]{Weiss}).

\medskip \noindent
\underline{Claim 1}: \emph{$|\Sigma|_{\CAT}$ is a closed convex
subset of $|\Delta|_{\CAT}$.}

\medskip \noindent
A basic fact about buildings is the existence, for each pair
$(\Sigma,C)$ consisting of an apartment $\Sigma$ and of a chamber
$C\in\Sigma$, of a \emph{retraction of $\Delta$ onto $\Sigma$
centered at $C$}, that is, of a simplicial map
$\rho=\rho_{\Sigma,C}\co\Delta\to \Sigma$ preserving minimal
galleries from $C$ and such that $\rho_{|\Sigma}=\id_{|\Sigma}$. The
induced mapping $\overline{\rho}\co |\Delta|_{\CAT}\to
|\Sigma|_{\CAT}$ then maps every geodesic segment of
$|\Delta|_{\CAT}$ onto a piecewise geodesic segment of
$|\Sigma|_{\CAT}$ of same length. In particular, the mapping
$\overline{\rho}$ is distance decreasing (see~\cite[Lemme
11.2]{Davis}). Hence, if $x$ and $y$ are two points in
$|\Sigma|_{\CAT}$, then the geodesic segment from $x$ to $y$ is
entirely contained in $|\Sigma|_{\CAT}$ since its image by
$\overline{\rho}$ is also a geodesic from $x$ to $y$. This proves
that $|\Sigma|_{\CAT}$ is convex. To see it is closed, it suffices
to note that it is complete as a metric space since it is precisely
the Davis realization of the building $\Sigma$.

\medskip \noindent
\underline{Claim 2}: \emph{If $g\in G$ is elliptic in
$X=|\Delta|_{\CAT}$ and stabilizes $|\Sigma|_{\CAT}$, then $g$ is
also elliptic in $|\Sigma|_{\CAT}$.}

\medskip \noindent
This follows from Claim 5 in the proof of Theorem~\ref{thm compact
elliptic}.

Theorem~\ref{thm compact elliptic} now implies that the
induced action of $W$ on $|\Sigma|_{\CAT}$ is elliptic, that is,
every $w\in W$ is elliptic. Notice that the $W$-action on
$|\Sigma|_{\CAT}$ is proper, since by construction, it is cellular and the stabilizer of every point is a spherical (in particular finite) parabolic subgroup of $W$. Recalling now that every infinite finitely
generated Coxeter group contains elements of infinite order (in
fact, so do all finitely generated infinite linear groups by a
classical result of Schur~\cite{Schur}; in the special case of
Coxeter groups, a direct argument may be found in~\cite[Proposition
2.74]{ABrown}), we deduce that $W$ is finite. In other words
$\Delta$ is spherical.
\end{proof}

\section{Proof of Theorem 2}
\subsection{Heuristic sketch}
Let $G$ be a compact group possessing a split spherical BN-pair, and
let $\Delta$ be the associated building. We first establish Theorem
2 when $G$ acts continuously on $\Delta$. In that case,
$2$-transitive actions (which are closely related to strongly
transitive actions) of $G$ on subspaces $X$ of $\Delta$ are easily
seen to be possible only for finite $X$. The second step is then to
show that the action of $G$ on $\Delta$ has to be continuous. This
uses the fact that buildings arising from split spherical BN-pairs
are Moufang (see Proposition~\ref{prop V transitive} below).

\subsection{Continuous actions on buildings}
Recall that a topological space $X$ is said to satisfy the {\bf
$T_1$} separation axiom when all its singletons are closed. The
following is probably well-known.

\begin{lemma}\label{thm 2-transitive continuous action}
Let $G$ be a compact group. If $G$ admits a continuous
$2$-transitive action on a $T_1$ topological space $X$, then $X$ is
finite.
\end{lemma}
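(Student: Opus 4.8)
The plan is to argue by contradiction, assuming that $X$ is infinite. The key observation is that compactness of $G$ forces strong finiteness constraints on point stabilizers in a continuous transitive action on a $T_1$ space. First I would fix a point $x_0\in X$ and let $G_{x_0}$ be its stabilizer; since the action is continuous and $X$ is $T_1$, the singleton $\{x_0\}$ is closed, so $G_{x_0}=\{g\in G\mid gx_0=x_0\}$ is the preimage of a closed set under the continuous orbit map $g\mapsto gx_0$, hence $G_{x_0}$ is a closed, and therefore compact, subgroup of $G$. The orbit map induces a continuous bijection from the coset space $G/G_{x_0}$ onto $X$; since $G/G_{x_0}$ is compact and $X$ is $T_1$ (so in particular points of $X$ are closed and the quotient map is closed), one checks that this bijection is a homeomorphism, so $X$ is compact. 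In particular $X$ is an infinite compact $T_1$ space, so it has a non-isolated point; by homogeneity of the $G$-action, every point of $X$ is non-isolated, i.e.\ $X$ is perfect.

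The heart of the argument exploits $2$-transitivity. Pick two distinct points $x_0, x_1 \in X$; the stabilizer $G_{x_0}$ acts transitively on $X\setminus\{x_0\}$, and the stabilizer $G_{x_0,x_1}=G_{x_0}\cap G_{x_1}$ is a closed (hence compact) subgroup of $G_{x_0}$ whose coset space $G_{x_0}/G_{x_0,x_1}$ is continuously in bijection with $X\setminus\{x_0\}$. Now $X\setminus\{x_0\}$ is an open subset of the compact space $X$; since $X$ is perfect, $X\setminus\{x_0\}$ is not closed in $X$ (its closure contains $x_0$), so $X\setminus\{x_0\}$ is not compact. But $G_{x_0}/G_{x_0,x_1}$ \emph{is} compact, being a continuous image of the compact group $G_{x_0}$. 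The same homeomorphism argument as before — using that $X\setminus\{x_0\}$ with the subspace topology is still $T_1$ — would identify $G_{x_0}/G_{x_0,x_1}$ homeomorphically with $X\setminus\{x_0\}$, giving a compact space homeomorphic to a non-compact one, which is absurd. Hence $X$ must be finite.

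The main obstacle, and the point requiring the most care, is upgrading the continuous bijection $G/G_{x_0}\to X$ (and its analogue on the complement of a point) to a homeomorphism, since in general a continuous bijection from a compact space to an arbitrary $T_1$ space need not be a homeomorphism unless the target is Hausdorff. The resolution is that a continuous bijection from a \emph{compact} space to a space in which points are closed is automatically a closed map when the domain is compact and the target is $T_1$ — more precisely, one shows directly that the orbit map $G \to X$, $g \mapsto gx_0$, is a closed map: if $C \subseteq G$ is closed, then $C$ is compact, so $C x_0$ is compact in $X$, and a compact subset of a $T_1$ space need not be closed in general, so this naive approach fails. The correct fix is to instead avoid claiming a homeomorphism and argue purely with compactness: $X$, being a continuous image of the compact group $G$, is itself compact; and $X \setminus \{x_0\}$, being a continuous image of the compact group $G_{x_0}$, is also compact; but an infinite compact $T_1$ space has a non-isolated point, hence by homogeneity is perfect, and then $X\setminus\{x_0\}$ is a proper dense (hence non-closed, hence non-compact) open subset of $X$ — contradiction. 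I would present the proof in this streamlined form, which needs only the elementary facts that closed subgroups of compact groups are compact, that continuous images of compact spaces are compact, and that open dense proper subsets of compact Hausdorff-free $T_1$ spaces are non-compact; the only genuinely delicate step is verifying that $X$ is perfect once it is known to be infinite, compact, $T_1$, and homogeneous.
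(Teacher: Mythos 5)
Your route to the key compactness facts is sound, and in fact more direct than the paper's: you obtain $G_{x_0}$ closed (hence compact) as the preimage of the closed singleton $\{x_0\}$ under the continuous orbit map, and then $X\setminus\{x_0\}$ is compact as the image of $G_{x_0}$ under $g\mapsto g\cdot x_1$, by $2$-transitivity; the paper instead reaches the same compactness by passing through the set $Y$ of distinct pairs (the image of $G$ under $(\alpha_{x},\alpha_{y})$) and cutting out $(X\setminus\{x\})\times\{x\}$ as a closed subset of $Y$. The compactness of $X$ and the observation that an infinite compact space has a non-isolated point are also fine (though your homogeneity step needs every $g\in G$ to act as a homeomorphism of $X$, which holds if ``continuous action'' means joint continuity; the paper's own argument only ever uses continuity of the orbit maps).

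The genuine gap is in your final step: from ``$X\setminus\{x_0\}$ is open, dense and proper, hence not closed'' you conclude ``hence not compact''. That implication is precisely the statement that compact subsets of $X$ are closed, which is valid in Hausdorff spaces but not in general $T_1$ spaces --- and you correctly flag this very fact a few lines earlier when discarding the closed-map attempt, only to reinstate it in contrapositive form in the ``streamlined'' version. It really does fail: on an infinite set with the cofinite topology (a compact, $T_1$, homogeneous space with no isolated points) every subset is compact, so there $X\setminus\{x_0\}$ is a proper dense open \emph{compact} subset, and no contradiction follows from the facts you have assembled; in particular your closing claim that ``open dense proper subsets of compact $T_1$ spaces are non-compact'' is false as a general topological statement. (To be fair, the paper's proof makes the analogous move, deducing ``$Z$ is closed'' from ``$Z$ is compact''.) The standard repair, which rescues your argument and removes the need for the perfectness detour, is to use the closedness of $G_{x_0}$ once more: the coset space $G/G_{x_0}$ with the quotient topology is compact \emph{Hausdorff}, the natural bijection $G/G_{x_0}\to X$ is continuous (so the given topology on $X$ is coarser), and the $G$-action and its $2$-transitivity are unchanged; replacing the topology of $X$ by this finer Hausdorff one, ``compact implies closed'' becomes legitimate, and then either your contradiction or the paper's conclusion (that $\{x_0\}$ is open, whence $X$ is discrete and, being compact, finite) goes through.
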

\begin{proof}
Define $Y:=\{(x,y)\in X\times X \ | \ x\neq y\}\subset X\times X$, and fix $x,y\in X$ with $x\neq y$. Since the mapping $\alpha_x\co G\to X: g\mapsto g\cdot x$ is continuous, so is $\alpha_x\times \alpha_y\co G\to X\times X: g\mapsto (g\cdot x, g\cdot y)$. By $2$-transitivity, we get $Y= (\alpha_x\times \alpha_y)(G)$, and so $Y$ is compact.

Note also that the mapping $f\co X\times X\to X\times X: (a,b)\mapsto (x,b)$ is continuous. Setting $Z:=X\setminus\{x\}$, we then get $Z\times\{x\}= f^{-1}(\{(x,x)\})\cap Y$, so that $Z\times\{x\}$ is closed in $Y$, and hence compact. It follows that $Z$ is compact, being the image of $Z\times\{x\}$ by the projection on the first factor $X\times X\to X$, which is of course continuous.

In particular, $Z$ is closed, and hence $\{x\}$ is open. It follows that $X$ is discrete, and therefore finite since $X=\alpha_x(G)$ is compact.
\end{proof}

Let $\Delta$ be a building of type $(W,S)$, and denote by
$\Ch\Delta$ the set of its chambers. Consider the chamber system
$\Gamma$ of $\Delta$, which is the labelled graph with vertex set
$\Ch\Delta$ and with an edge labelled by $s\in S$ for each pair of
$s$-adjacent chambers of $\Delta$ (see~\cite[Ch.I Appendix
D]{Brown}). Let $J\subset S$. A {\bf $J$-gallery} in $\Gamma$
between two chambers $x$ and $y$ of $\Delta$ is a sequence
$(x=x_0,x_1,\dots,x_l=y)$ of chambers of $\Delta$ such that for each
$i\in\{1,\dots,l\}$, there exists an $s\in J$ such that $x_{i-1}$ is
$s$-adjacent to $x_i$. The natural number $l$ is called the {\bf
length} of the gallery. A {\bf minimal} gallery is a gallery of
minimal length. The {\bf distance} in $\Delta$ between two chambers
$x,y\in\Ch\Delta$ is the length of a minimal gallery joining $x$ to
$y$. The {\bf diameter} of $\Gamma$ is the supremum (in
$\NN\cup\{\infty\}$) of the distances between its vertices.

Let $J\subset S$. The {\bf $J$-residue} $R=R_J(x)$ of some chamber $x\in\Ch\Delta$ is the set of chambers of $\Delta$ which are connected to $x$ by a $J$-gallery. When $J$ has cardinality $1$, we call $R$ a {\bf panel}.

In this paper, we will say that a group $G$ acts {\bf continuously} on $\Delta$ if the stabilizers of the residues of $\Delta$ are closed in $G$. Note that we can of course restrict our attention to the maximal proper residues, the others being obtained as intersections of those.

\begin{lemma}\label{thm continuous}
Let $G$ be a compact group acting continuously and strongly
transitively by type-preserving automorphisms on a spherical thick building $\Delta$. Then $\Delta$
is finite.
\end{lemma}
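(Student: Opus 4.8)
The plan is to reduce the statement to Lemma~\ref{thm 2-transitive continuous action} applied to a suitable continuous $2$-transitive action of $G$. Since $\Delta$ is spherical and thick, it has finite rank and finitely many types of spherical residues; the chamber set $\Ch\Delta$ carries a natural topology coming from the quotient topology on $G/B$, where $B=\Stab_G(C_0)$ for a fixed chamber $C_0$. Because $G$ acts continuously (stabilizers of residues, in particular of chambers, are closed) and $G$ is compact, $G/B$ is a compact Hausdorff space, and the induced $G$-action on $\Ch\Delta$ is continuous. One must first check that this topology is $T_1$: since $G$ is compact and $B$ is closed, $G/B$ is in fact compact Hausdorff, so singletons are closed. (Alternatively, the fact that the action is type-preserving and the building has finite diameter gives directly that point-stabilizers of chambers are closed, hence $G/B$ is $T_1$.)

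Next I would produce the $2$-transitive action. Strong transitivity of $G$ on $\Delta$ means $G$ is transitive on pairs (apartment, chamber in it); standard building theory then gives that $G$ is transitive on ordered pairs of chambers at any fixed Weyl distance $w\in W$. The natural candidate for a $2$-transitive action is therefore the action on the set of \emph{opposite} pairs of chambers, or more simply: for a panel $\pi$ (an $s$-residue with $s\in S$), the stabilizer $\Stab_G(\pi)$ acts on the chamber set of $\pi$. Thickness ensures $|\Ch\pi|\ge 3$; strong transitivity plus the fact that $\Delta$ is spherical (so one can always extend to apartments) yields that $\Stab_G(\pi)$ acts $2$-transitively on $\Ch\pi$. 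Since $\Stab_G(\pi)$ is a closed, hence compact, subgroup of $G$, and its action on the finite-or-infinite set $\Ch\pi$ (with the subspace/quotient topology, which is $T_1$ by the same argument as above) is continuous and $2$-transitive, Lemma~\ref{thm 2-transitive continuous action} forces $\Ch\pi$ to be finite. Thus every panel of $\Delta$ is finite, i.e.\ $\Delta$ is locally finite.

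Finally, a locally finite spherical building of finite rank is finite: $\Ch\Delta$ is covered by finitely many ``balls'' of radius equal to the diameter of $W$ around a fixed chamber $C_0$, and each such ball is finite because at each step one passes to an adjacent chamber within a finite panel, and $W$ has finite diameter. Concretely, $|\Ch\Delta| \le \prod_{\text{gallery types}} (\max_\pi |\Ch\pi|)$, a finite product of finite numbers. Hence $\Delta$ is finite.

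The main obstacle I anticipate is establishing the $2$-transitivity of $\Stab_G(\pi)$ on $\Ch\pi$ cleanly from strong transitivity: one needs that any two distinct chambers of $\pi$ lie in a common apartment together with a prescribed third chamber, so that an element of $G$ carrying one configuration to another can be found inside the panel stabilizer. In the spherical case this is routine (opposite chambers always exist and apartments are plentiful), but it is the one place where sphericity and thickness are genuinely used, so I would take care to state precisely which transitivity property of building automorphism groups I invoke (transitivity on pairs of chambers at fixed Weyl distance, restricted to the panel). A secondary technical point is to make sure the topology on $\Ch\pi$ as a $G$-space is the $T_1$ quotient topology to which Lemma~\ref{thm 2-transitive continuous action} applies; this follows from $\Stab_G(\pi)$ being compact and the stabilizer of a chamber being closed.
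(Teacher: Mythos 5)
Your plan is correct and follows essentially the same route as the paper: the panel stabilizer is closed hence compact, acts $2$-transitively on the (thick) panel by strong transitivity, so Lemma~\ref{thm 2-transitive continuous action} applied to $H/B$ (which is $T_1$ since $B$ is closed) gives local finiteness, and finiteness then follows from the finite diameter of a spherical building exactly as in the paper's Claim~2. The $2$-transitivity step you flag is handled in the paper just as you anticipate: $\Stab_G(C)$ is transitive on apartments through $C$, hence on the other chambers of the panel, and thickness supplies a third chamber to get full transitivity of the panel stabilizer.
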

\begin{proof}
The stabilizer $H$ in $G$ of a panel $P$ of $\Delta$ is a closed and thus compact subgroup of $G$.

\medskip \noindent
\underline{Claim 1}: \emph{$H$ acts $2$-transitively on $P$.}

\medskip \noindent
Indeed, let $C$ be a chamber of $P$ and let $B:=\Stab_G(C)\subset
H$. We first show that $B$, and thus also $H$, is transitive on the
set $\CCC=P\setminus \{C\}$. Let $C_1, C_2\in\CCC$ and let
$\Sigma_1$ (respectively, $\Sigma_2$) be an apartment containing $C$
and $C_1$ (respectively, $C$ and $C_2$). By strong transitivity, $B$
is transitive on the set of apartments containing $C$, and so there
exists a $b\in B$ such that $b\Sigma_1=\Sigma_2$. Hence $bC_1=C_2$.
It now remains to show that $H$ is transitive on $P$. But if
$C_1,C_2\in P$, then since $\Delta$ is thick, we may choose a
chamber $C$ in $P$ different from $C_1,C_2$. The stabilizer $B'$ of
$C$ in $G$ is then contained in $H$ and is transitive on $P\setminus
\{C\}$ by the previous argument.

Now, identifying $\Delta$ with $\Delta(G,B)$, so that $H=B\cup BsB$
for some generator $s$ of the corresponding Weyl group, we get a
$2$-transitive, continuous action by left translation of the compact
group $H$ on the topological space $H/B$. Moreover, this space is
$T_1$ since $B$ is closed in $G$ by hypothesis. Lemma~\ref{thm
2-transitive continuous action} then implies that $P$ is finite. In
other words, as $P$ was arbitrary, the building $\Delta$ is
\emph{locally finite}, that is, every panel is finite. The following
observation now allows us to conclude:

\medskip \noindent
\underline{Claim 2}: \emph{Every locally finite spherical building
is finite.}

\medskip \noindent
Indeed, let $\Gamma=\Ch\Delta$ be the graph whose vertices are the
chambers of $\Delta$, and such that two chambers of $\Delta$ are
adjacent if they share a common panel. Since $\Delta$ is locally
finite, so is $\Gamma$. Hence, fixing a vertex $x\in\Gamma$, each
ball in $\Gamma$ centered at $x$ with radius $n$ ($n\in\NN$)
possesses a finite number of vertices. Moreover, as $\Delta$ is
spherical, the diameter of $\Delta$ is finite (\cite[Ch.IV,
3]{Brown}), and hence the diameter of $\Gamma$ is also finite. Thus
$\Gamma$ is contained in such a ball, and is therefore finite.
\end{proof}

\subsection{Moufang buildings}

Let $\Delta=\Delta(G,B)$ be the building associated to a split
spherical BN-pair $(B=T\ltimes U,N)$ of type $(W,S)$. It is well-known (see the main result of \cite{Moufang}) that the existence of a splitting for the above BN-pair is equivalent to the fact that the building $\Delta$ enjoys the Moufang property, as defined in \cite[Chapter~11]{Weiss}.

Two chambers
$x,y\in\Ch\Delta$ are called {\bf opposite} if they are at maximal
distance in the chamber system of $\Delta$. Similarly, one can
define \emph{opposite residues} (see for instance
\cite[5.7]{ABrown}). The set of chambers (respectively, residues) of
$\Delta$ which are opposite to a given chamber $C$ (respectively,
residue $R$) will be denoted by $C^{\op}$ (respectively, $R^{\op}$).

\begin{prop}\label{prop V transitive}
Let $P=BW_JB$ be a proper standard parabolic subgroup of $\Delta=\Delta(G,B)$ for some proper subset $J$ of $S$, let $C$ be the fundamental chamber (i.e. the unique chamber fixed by $B$) and let $R$ be the unique $J$-residue containing $C$. Define the subgroup $V:=\bigcap_{p\in P}{pUp\inv}$ of $G$. Then $V$ acts simply transitively on $R^{\op}$.
\end{prop}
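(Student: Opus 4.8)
The plan is to deduce the simple transitivity of $V$ on $R^{\op}$ from two standard facts about Moufang spherical buildings, combined with a Bruhat-decomposition bookkeeping argument. Recall that $(B,N)$ is split, so $\Delta=\Delta(G,B)$ is Moufang by the main result of \cite{Moufang}; write $w_J$ for the longest element of $W_J$ and $w_0$ for the longest element of $W$. First I would identify $R^{\op}$ concretely: by the theory of opposite residues in a spherical building, the residues opposite to $R$ are exactly the $J'$-residues (where $J'=w_0 J w_0$) at maximal gallery distance from $R$, and $G$ — acting strongly transitively — is transitive on the set of such residues. So it suffices to prove that $V$ is transitive on $R^{\op}$ and that the stabilizer in $V$ of one element $R_0\in R^{\op}$ is trivial. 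For the transitivity, I would exhibit a canonical $R_0$ (the $J'$-residue through a fixed chamber $C_0$ opposite $C$ inside the fundamental apartment $\Sigma$) and use the Moufang property: the unipotent radical $U$, which is generated by the positive root groups, acts transitively on the chambers opposite $C$, and a suitable subgroup of $U$ — precisely the intersection of the conjugates $pUp^{-1}$ over $p\in P$ — survives to act transitively on the residues opposite $R$. Concretely, $V=\bigcap_{p\in P}pUp^{-1}$ is normal in $P$, hence $P$-stable, and one checks that $V$ is the product of the root groups $U_\alpha$ for those roots $\alpha$ that are positive and remain positive under all of $W_J$, i.e. the roots whose wall does not meet the interior of $R$; these are exactly the root groups that "move the residue opposite $R$" and generate a group acting transitively on $R^{\op}$.

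The cleanest way to organize this is via the Bruhat-style decomposition of $G$ relative to the residue $R$. Since $P=BW_JB=\bigsqcup_{w\in W_J}BwB$ is a group, $G$ decomposes as a disjoint union of double cosets $PgP$ indexed by $W_J\backslash W/W_J$, and the unique double coset of maximal length corresponds to the element $w_0$; the chambers (resp. residues) opposite $C$ (resp. $R$) are precisely those in the image of this top cell. Using the standard refinement $PwP = (V\cap \ldots)\,$-orbit decomposition — more precisely, writing $U=(U\cap w_0^{-1}P w_0)(U\cap w_0^{-1}P^{\op}w_0)$ and tracking which factor lies in $V$ — one sees that $V$ meets the top double coset in a single $V$-orbit on residues, which gives transitivity, and that the first factor (the part of $U$ that fixes $R_0$) has trivial intersection with $V$, which gives freeness. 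Freeness can alternatively be seen directly: if $v\in V$ stabilizes $R_0\in R^{\op}$ then $v$ fixes both $R$ (being in $U\le B$, which fixes $C$ and hence stabilizes $R$) and $R_0$, and in a spherical building the pointwise stabilizer of a residue together with an opposite residue, intersected with a unipotent group, is trivial — this is the defining rigidity of Moufang root groups, that $U_\alpha$ acts freely on the panel it moves, propagated to residues.

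The main obstacle I expect is the identification of $V=\bigcap_{p\in P}pUp^{-1}$ with the "right" product of root groups and the verification that this product acts transitively (not just that $U$ itself does). One must be careful that conjugating $U$ by elements of $W_J$ does not simply shrink $V$ to the trivial group: the point is that root groups $U_\alpha$ with $\alpha$ positive and $w\alpha$ positive for all $w\in W_J$ do survive, and these are nonempty precisely because $R$ is a \emph{proper} residue (as $J\subsetneq S$), so there is at least one simple root outside $J$, guaranteeing $V\neq 1$. I would handle this by passing to the apartment $\Sigma$, where $W_J$ acts as the parabolic reflection subgroup, classifying roots of $\Sigma$ by whether their wall separates chambers of $R$, and invoking the Moufang-theoretic fact that the root groups indexed by "$R$-exterior" positive roots generate a group simply transitive on $R^{\op}$; this is essentially the content of the corresponding statement for chambers (the case $J=\emptyset$, where $V=U$ and $R^{\op}=C^{\op}$), extended to residues via the projection $R^{\op}\to$ chambers and the fact that $V$ respects the fibration. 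The bookkeeping is routine once the root-group picture is in place, so the real work is setting up that correspondence precisely.
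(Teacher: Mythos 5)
Your overall strategy --- exhibit root groups inside $V$ acting transitively on $R^{\op}$, and get freeness from rigidity --- is the same as the paper's, but the two steps that carry the whole weight of the proposition are asserted rather than proved, and the reduction you propose for the first one does not work as stated. For transitivity, you invoke ``the Moufang-theoretic fact that the root groups indexed by $R$-exterior positive roots generate a group simply transitive on $R^{\op}$'' and suggest deducing it from the chamber case $J=\emptyset$ ``via the projection $R^{\op}\to$ chambers and the fact that $V$ respects the fibration''. That reduction fails: transitivity of $U$ on $C^{\op}$ only produces elements of $U$ carrying one element of $R^{\op}$ to another, and for $J\neq\emptyset$ the stabilizer in $U$ of a residue opposite $R$ is nontrivial, so there is no reason such elements can be taken (or corrected to lie) in $V=\bigcap_{p\in P}pUp\inv$; the fact you are quoting is essentially the proposition itself. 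The paper does exactly this missing work: it takes, for each $R'\in R^{\op}$, the canonical minimal gallery of fixed type from $C$ to a chamber $C'\in R'$ with $\proj_RC'=C$ (\cite[9.11]{Weiss}), produces roots $\alpha$ of $\Sigma$ with $R\cap\Sigma\subset\alpha$ (\cite[8.21]{Weiss}), proves $U_\alpha\subseteq V$ using that $P$ stabilizes $R$ (\cite[9.7, 11.1, 11.11]{Weiss}), and then runs the gallery induction of \cite[7.67]{ABrown} entirely inside the group generated by these $U_\alpha$. You flag this identification yourself as ``the real work'', i.e.\ it is not carried out; note also that the exact equality of $V$ with a product of root groups is neither needed nor established --- only the inclusion of the relevant $U_\alpha$ in $V$ matters.

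The freeness argument has a concrete gap of its own: from $v\in U\leq B$ you conclude only that $v$ stabilizes $R$ \emph{setwise}, yet the rigidity statement you then invoke concerns \emph{pointwise} fixators --- and read with setwise stabilizers it is false, since for $J\neq\emptyset$ the group $U\cap\Stab_G(R)\cap\Stab_G(R_0)$ contains nontrivial root groups of the ``Levi part''. The missing step is precisely where the definition of $V$ enters: since $P$ acts transitively on the chambers of $R$ and $pUp\inv$ fixes the chamber $pC$, any $v\in\bigcap_{p\in P}pUp\inv$ fixes $R$ chamberwise; then, because $\proj_{R_0}$ restricts to a bijection from $R$ to $R_0$ (\cite[9.11]{Weiss} again), $v$ fixes $R_0$ chamberwise, and only at that point can rigidity be applied (\cite[9.8 and 9.7]{Weiss}: $v$ fixes two opposite chambers, hence an apartment, hence is trivial). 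Without the pointwise-fixing step the conclusion $v=1$ does not follow, so as written the freeness half of your argument is incomplete as well.
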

\begin{proof}
Let $\Sigma$ be an apartment containing $C$. By \cite[9.11]{Weiss}, there exists a
minimal galery $\gamma_{R'}$, one for each residue $R'\in R^{\op}$, beginning at $C$ and ending at a chamber $C'$ in
$R'$ such that the type of $\gamma_{R'}$ is independent of the choice of $R'$ and $C = \proj_RC'$. Let $R'  \in R^{\op}$ be the unique residue of $\Sigma$ opposite $R$ and let $C'$ be the last chamber of $\gamma_{R'}$. Let also $\alpha$ be a root of $\Sigma$ containing $C$ but not $C'$. By \cite[8.21]{Weiss}, $R\cap \Sigma\subset\alpha$. By \cite[9.7]{Weiss}, therefore, $R$ is fixed pointwise by the root group
$U_{\alpha}$. Since $P$ maps $R$ to itself, we have $C\in R\subset\alpha^p$ and hence $p\inv U_{\alpha}p\subset U$ for all $p\in P$ by the definition of root subgroups (see \cite[11.1]{Weiss}) and the fact that the `radical' $U$ does not depend on the choice of the apartment $\Sigma$ (see  \cite[Proposition~11.11(iii)]{Weiss}). Thus $U_{\alpha}\subset V$. Now, as in \cite[7.67]{ABrown}, one shows that the subgroup of $V$ generated by all $U_\alpha$'s of the latter form acts transitively on the set $\{\gamma_{R''} \; | \; R'' \in R^{\op}\}$, and hence also  on $R^{\op}$.

Suppose $h\in V$ maps $R'\in R^{\op}$
to itself. Then $h$ acts trivially on $R$. Since the restriction of $\proj_{R'}$ to $R$ is a bijection from $R$ to
$R'$ (by \cite[9.11]{Weiss} again), it follows that $h$ acts trivially on $R'$. By \cite[9.8]{Weiss}, therefore, $h$ fixes
two opposite chambers of $\Sigma$ and hence $h$ fixes $\Sigma$. By \cite[9.7]{Weiss} again, we conclude that $h = 1$.
\end{proof}

In particular, we have the following (compare~\cite[Ch.IV,
5]{Brown}).

\begin{lemma}\label{lemma U transitive}
Let $C$ be the fundamental chamber of $\Delta$. Then $U$ acts simply
transitively on $C^{\op}$. Equivalently, $U$ acts simply
transitively on the set of apartments containing $C$.
\end{lemma}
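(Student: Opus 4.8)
The plan is to deduce Lemma~\ref{lemma U transitive} as the special case $J=\emptyset$ of Proposition~\ref{prop V transitive}. When $J=\emptyset$, the standard parabolic $P=BW_\emptyset B=B$ is the Borel itself, and the $J$-residue $R$ containing the fundamental chamber $C$ degenerates to the single chamber $\{C\}$. Consequently $R^{\op}$ is exactly $C^{\op}$, the set of chambers opposite $C$. Moreover the subgroup $V=\bigcap_{p\in P}pUp\inv=\bigcap_{p\in B}pUp\inv$ equals $U$ itself, since $U$ is normal in $B$ (this is part of the split hypothesis $B=T\ltimes U$, so $pUp\inv=U$ for all $p\in B$). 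Plugging these identifications into Proposition~\ref{prop V transitive} yields immediately that $U$ acts simply transitively on $C^{\op}$, which is the first assertion.

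Next I would justify the ``equivalently'' clause by recalling the standard bijection between $C^{\op}$ and the set of apartments containing $C$. In a spherical building, every apartment containing $C$ contains a unique chamber opposite $C$ (namely the chamber of that apartment at maximal gallery distance from $C$), and conversely, given a chamber $D$ opposite $C$, there is a unique apartment containing both $C$ and $D$ (see~\cite[Ch.IV, 5]{Brown} or the analogous statement in~\cite{ABrown}). This assignment $\Sigma\mapsto$ (the chamber of $\Sigma$ opposite $C$) is $\Stab_G(C)$-equivariant, and in particular $U$-equivariant, so transitivity and freeness of the $U$-action on one set transfer verbatim to the other.

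I expect no genuine obstacle here: the lemma is a corollary obtained by specialization, and the only points requiring a word of care are (i) checking that $W_\emptyset=\{1\}$ so that $P=B$ and $R=\{C\}$ — routine from the definition of standard parabolic — and (ii) confirming that $\bigcap_{p\in B}pUp\inv=U$, which follows from $U\triangleleft B$. The bijection with apartments is classical and is precisely the content of the cited reference, so it can be invoked rather than reproved. If one wished to avoid even mentioning $W_\emptyset$, an equally short route is to observe directly that $C^{\op}$ is the residue opposite the rank-zero residue $\{C\}$, and apply Proposition~\ref{prop V transitive} with $R=\{C\}$; the proof of that proposition goes through unchanged when $J=\emptyset$, the roots $\alpha$ involved being simply the roots of $\Sigma$ containing $C$.
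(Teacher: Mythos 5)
Your proposal is correct and is exactly how the paper derives the lemma: it is stated as an immediate specialization ("in particular") of Proposition~\ref{prop V transitive} to $J=\emptyset$, where $P=B$, $R=\{C\}$, $R^{\op}=C^{\op}$ and $V=\bigcap_{p\in B}pUp\inv=U$ by normality of $U$ in $B$, with the apartment reformulation coming from the classical bijection in \cite[Ch.IV, 5]{Brown}. You simply spell out the details the paper leaves implicit.
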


\begin{lemma}\label{lemma at least two opposite residues}
Let $P=BW_JB$ be a proper standard parabolic subgroup of $\Delta=\Delta(G,B)$ for some proper subset $J$ of $S$, let $C$ be the fundamental chamber and let $R$ be the unique $J$-residue containing $C$. Then there exist two chambers in $C^{\op}$
which are opposite to one another. In particular, $|R^{\op}|\geq 2$.
\end{lemma}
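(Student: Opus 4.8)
The plan is to produce, starting from an arbitrary chamber opposite $C$, a second chamber opposite both $C$ and the first one by walking along a well-chosen minimal gallery, and then to deduce the statement about residues by passing to residues of the appropriate type through these two chambers.

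Write $(W,S)$ for the type of $\Delta$. Since $\Delta$ is spherical, $W$ is finite; let $w_0\in W$ be its longest element, $N:=\ell(w_0)$, and $\delta\co\Ch\Delta\times\Ch\Delta\to W$ the Weyl distance, so that two chambers are opposite precisely when their $\delta$-distance equals $w_0$. (If $S=\emptyset$ there is no proper subset $J$ and nothing to prove, so assume $N\geq 1$.) First, $C^{\op}\neq\emptyset$: any apartment $\Sigma$ containing $C$ contains the chamber $\op_\Sigma(C)$ opposite $C$. Fix $D_1\in C^{\op}$ and a reduced decomposition $w_0=t_1t_2\cdots t_N$ with each $t_i\in S$. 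I would build chambers $D_1=G_0,G_1,\dots,G_N$ with $\delta(D_1,G_i)=t_1\cdots t_i$ and $G_i\in C^{\op}$ for every $i$, as follows. Given $G_i$, let $P_i$ be the panel of type $t_{i+1}$ containing $G_i$. Because the word $t_1\cdots t_it_{i+1}$ is reduced, $G_i$ is the projection $\proj_{P_i}(D_1)$, and hence every chamber of $P_i$ distinct from $G_i$ is at $\delta$-distance $t_1\cdots t_{i+1}$ from $D_1$; moreover $G_i$, being at the maximal distance $w_0$ from $C$, is not $\proj_{P_i}(C)$, so every chamber of $P_i$ distinct from $\proj_{P_i}(C)$ is opposite $C$. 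As $\Delta$ is thick, $|P_i|\geq 3$, so we may choose $G_{i+1}\in P_i\setminus\{G_i,\proj_{P_i}(C)\}$, and this chamber has both required properties. Setting $D_2:=G_N$ gives $\delta(D_1,D_2)=w_0$ and $D_2\in C^{\op}$: two chambers of $C^{\op}$ that are opposite to one another.

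For the last assertion, put $J^{\op}:=w_0Jw_0\subseteq S$ and let $R_i$ be the $J^{\op}$-residue of $\Delta$ containing $D_i$ ($i=1,2$). Since $D_i$ is opposite $C\in R$, choosing an apartment $\Sigma_i$ containing $C$ and $D_i$ one checks that $R_i\cap\Sigma_i$ is the residue of the Coxeter complex $\Sigma_i$ opposite $R\cap\Sigma_i$; as opposition of residues may be tested inside any apartment meeting both, $R_i$ is opposite $R$, i.e.\ $R_i\in R^{\op}$. Finally $R_1\neq R_2$, for otherwise $D_1$ and $D_2$ would lie in a common $J^{\op}$-residue, whence $\delta(D_1,D_2)\in W_{J^{\op}}$; this is impossible since $W_{J^{\op}}$ is a proper standard parabolic subgroup of $W$ ($|J^{\op}|=|J|<|S|$) and so does not contain $w_0=\delta(D_1,D_2)$. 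Hence $|R^{\op}|\geq 2$.

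The facts I am assuming are part of the basic theory of buildings (see e.g.\ \cite{ABrown} or \cite{Weiss}): the gate property of the projection onto a panel (the projection realizes the minimal $W$-distance, every other chamber of the panel lying one step further), and the fact that opposition of two residues can be read off inside any single apartment meeting both. The only point requiring some care is the bookkeeping in the inductive step — keeping $G_{i+1}$ opposite $C$ while advancing the gallery towards $D_1$ — which is exactly where thickness, providing a third chamber in each panel, is used.
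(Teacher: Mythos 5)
Your proof is correct and follows essentially the same route as the paper: the paper disposes of the first assertion by citing \cite[Proposition 4.104]{ABrown} (whose standard proof is exactly your gate-property/thickness induction along a reduced gallery producing a chamber of $C^{\op}$ opposite the given one), and deduces $|R^{\op}|\geq 2$ from the observation that no proper residue contains two opposite chambers, which is precisely your argument that $\delta(D_1,D_2)=w_0\notin W_{w_0Jw_0}$. The only difference is that you prove the cited fact from scratch rather than quoting it, which is fine.
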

\begin{proof}
The first assertion holds by \cite[Proposition 4.104]{ABrown} and the second follows since no proper residue contains two opposite chambers.
\end{proof}

We are now ready to complete the proof of Theorem~\ref{2}.

\begin{thm}
Let $G$ be a compact topological group possessing a spherical split BN-pair $(B=T\ltimes U,N)$. Then the associated building is finite.
\end{thm}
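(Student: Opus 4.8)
The strategy is to reduce the general (possibly discontinuous) case to the continuous case already handled in Lemma~\ref{thm continuous}. By Theorem~\ref{1}, the associated building $\Delta$ is spherical, so by Lemma~\ref{thm continuous} it suffices to show that the action of $G$ on $\Delta$ is continuous, i.e.\ that the stabilizer of every maximal proper residue $R$ is closed in $G$. Since $G$ is compact and the stabilizer of a residue is a subgroup, it is enough to prove that these stabilizers are \emph{open}, or more directly that they have finite index and hence, being intersections of conjugates of the closed-index-… — no: the cleanest route is to show each maximal parabolic has finite index in $G$, which (as in Theorem~\ref{2}'s statement $[G:B]<\infty$) gives finiteness of $\Delta$ outright.

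The key geometric input is Proposition~\ref{prop V transitive}: for the standard parabolic $P=BW_JB$ corresponding to the maximal residue $R$ through the fundamental chamber $C$, the group $V=\bigcap_{p\in P}pUp^{-1}$ acts simply transitively on $R^{\op}$, and by Lemma~\ref{lemma at least two opposite residues} we have $|R^{\op}|\geq 2$, so $V\neq 1$. First I would observe that $P=\Stab_G(R)$, so $[G:P]=|G\cdot R|=|R^{\op}|+(\text{residues not opposite }R)$; the point of invoking $V$ is to get a handle on the orbit structure and, crucially, to import the compactness of $G$ through $U$. The plan is: since $(B,N)$ is split with $B=U\rtimes T$, and since $G$ is compact, one uses Theorem~\ref{thm compact elliptic} applied to the Davis realization $|\Delta|_{\CAT}$ to conclude that every element of $G$ — in particular every element of $U$ — fixes a point of $|\Delta|_{\CAT}$, hence stabilizes a spherical residue. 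Combined with the Moufang structure (root groups $U_\alpha\subset U$ acting on $\Delta$), this forces strong constraints: a nontrivial element $u\in U_\alpha$ fixes the half-apartment on one side of $\alpha$ but acts nontrivially on $C^{\op}$, and if $U$ (or $V$) were infinite one would produce, via the simple transitivity on $R^{\op}$, an infinite set acted on freely, contradicting the finiteness forced by ellipticity plus the locally-Euclidean cell structure of $|\Delta|_{\CAT}$ (finitely many cell types, discrete translation spectrum).

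Concretely, the main steps I would carry out are: (1) Record $P=BW_JB=\Stab_G(R)$ and reduce to showing $|R^{\op}|<\infty$ for one maximal $R$, since finiteness of all such forces $\Delta$ locally finite, then finite by Claim~2 of Lemma~\ref{thm continuous}. (2) By Proposition~\ref{prop V transitive}, $R^{\op}$ is in $V$-equivariant bijection with $V$, so it suffices to show $V$ is finite; and since $V\le U$ with $U$ nilpotent, it is enough to bound $U$. (3) Apply Theorem~\ref{thm compact elliptic}: every $u\in U$ fixes a point of $|\Delta|_{\CAT}$, hence stabilizes a spherical residue of $\Delta$; using the Moufang action of root groups on apartments (as in the proof of Proposition~\ref{prop V transitive}) together with simple transitivity of $U$ on $C^{\op}$ (Lemma~\ref{lemma U transitive}), deduce that the $U$-orbit $C^{\op}$ must be finite — for instance, fix an apartment $\Sigma\ni C$; the element stabilizing a spherical residue containing a chamber opposite $C$, together with the discreteness of the translation-length spectrum on $|\Delta|_{\CAT}$ from Proposition~\ref{thm Bridson}, pins down $|C^{\op}|$. (4) Conclude $U$ finite, hence $V$ finite, hence $|R^{\op}|<\infty$, hence $\Delta$ finite and $[G:B]<\infty$.

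The hard part, and the step I expect to be the real obstacle, is step~(3): leveraging ellipticity of each individual element of $U$ (which is all Theorem~\ref{thm compact elliptic} gives — it says nothing about a common fixed point for all of $U$ at once, since the action need not be continuous and $U$ need not be finitely generated) to a genuine finiteness statement about the orbit $C^{\op}$. One cannot simply invoke a fixed-point theorem for the whole group $U$. The way around this, I believe, is to argue that if $C^{\op}$ were infinite, then by Lemma~\ref{lemma U transitive} $U$ is infinite, and then using the Moufang filtration (root groups) and the fact that an infinite procyclic-type subgroup would contain an element moving $C$ arbitrarily little in $|\Delta|_{\CAT}$ — contradicting the uniform positive lower bound $\inf\{|g|\neq 0\}>0$ of Proposition~\ref{thm Bridson} once one shows such an element must be hyperbolic, or contradicting ellipticity by producing a single element of $U$ with no fixed point. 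Making this precise requires carefully relating the combinatorial structure of $C^{\op}$ (equivalently $U$) to translation lengths in the Davis complex, and this bookkeeping — ensuring that largeness of $U$ genuinely produces either a non-semisimple or a small-translation-length element — is where the technical weight of the argument lies.
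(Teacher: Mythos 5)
The decisive step of your plan --- step (3), which you yourself flag as carrying the technical weight --- rests on a tool that is vacuous in the present situation. By Theorem~\ref{1} the building $\Delta$ is spherical, and the Davis realization of a spherical building is a \emph{bounded} $\CAT$ complex: since $W$ is finite, every residue (including the full $S$-residue) is spherical, so $|\Delta|_{\CAT}$ is a cone whose apex, the vertex corresponding to the unique $S$-residue, is fixed by every type-preserving automorphism. Consequently every element of $G$ is elliptic on $|\Delta|_{\CAT}$ whether $\Delta$ is finite or infinite, there are no hyperbolic isometries at all, and the lower bound on nonzero translation lengths from Proposition~\ref{thm Bridson} carries no information. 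You can therefore never ``produce a single element of $U$ with no fixed point'' nor an element of small positive translation length, and no contradiction with infiniteness of $U$ or of $C^{\op}$ can come out of the $\CAT$ geometry. A sanity check: $\SL_2(k)$ for any infinite field $k$ has a split spherical BN-pair with infinite building, and all its elements are elliptic on the bounded Davis realization of that building; your step (3) never uses the compactness of $G$ in a way that would distinguish the compact case from this one. In the paper the $\CAT$ machinery is confined to Theorem~\ref{1}; for Theorem~\ref{2} compactness must enter through the topology of $G$ itself.

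The idea you are missing is to prove that the action is \emph{continuous} (maximal standard parabolics are closed) and then apply Lemma~\ref{thm continuous}, rather than to bound $U$ directly. After reducing to the irreducible case (a reduction you omit, and which is needed to apply Tits' transitivity lemma), the paper takes a maximal proper standard parabolic $P=BW_JB$ and $V=\bigcap_{p\in P}pUp\inv\trianglelefteq P$, so that $P\subseteq\norma_G(\overline{V})$; by maximality of $P$, either $\norma_G(\overline{V})=P$ or $\overline{V}\triangleleft G$. The second case is excluded: $V\neq 1$ by Proposition~\ref{prop V transitive} and Lemma~\ref{lemma at least two opposite residues}, hence $\centra(V)\neq 1$ ($V$ being nilpotent) and it acts faithfully; if $\overline{V}$ were normal, Tits' transitivity lemma would make the Abelian group $\centra(\overline{V})$ simply transitive on chambers, forcing $\centra(V)\subseteq B$ to act freely on $\Ch\Delta$ while fixing the fundamental chamber --- a contradiction. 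Thus $P=\norma_G(\overline{V})$ is the normalizer of a closed subgroup, hence closed, the action is continuous, and Lemma~\ref{thm continuous} (resting on the $2$-transitive continuous action of panel stabilizers and Lemma~\ref{thm 2-transitive continuous action}, where compactness of $G$ does real work) yields local finiteness and then finiteness. Your auxiliary reduction from ``$R^{\op}$ finite for all maximal residues'' to local finiteness is also only asserted, but that is secondary: the argument collapses at step (3).
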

\begin{proof}
Let $\Delta=\Delta(G,B)$ be the building associated to $(B,N)$, and
let $(W,S)$ be the corresponding Coxeter system.

We start with some basic observations in the case $(W,S)$ is not
irreducible. Suppose thus that $S$ decomposes as $S=S_1\amalg S_2$
with $s_1s_2=s_2s_1$ for all $s_1\in S_1$ and $s_2\in S_2$. Then $W$ splits as a direct product $W \cong W_1 \times W_2$, where $W_i = \la S_i\ra$, and the building $\Delta$ decomposes canonically as a product $\Delta = \Delta_1 \times \Delta_2$ of buildings of type $(W_1, S_1)$ and $(W_2, S_2)$ respectively (see \cite[Proposition~7.33]{Weiss}). 

In particular, we obtain induced actions of $G$ on both $\Delta_1$ and $\Delta_2$, which are obviously strongly transitive. The corresponding BN-pairs for $G$ may be described as follows. Since
each $s\in S$ can be written as a coset $nT\in N/T=W$, we may
choose, for $i=1,2$, a set $\overline{N}_i$ of representatives in
$N$ for the elements of $S_i$. For each $i=1,2$, consider now the
subgroup $N_i$ of $N$ generated by $\overline{N}_i$ and $T$, and set
$B_i:=\la B\cup N_{3-i}\ra = B N_{3-i} B \leq G$. Then $(B_i,N_i)$ is a
spherical BN-pair for $G$, and the associated building is nothing but $\Delta_i = \Delta(G,B_i)$.

We claim that the BN-pair  $(B_i, N_i)$ is split. This follows readily
from the aforementioned equivalence between splittings of BN-pairs
and the Moufang property for the associated buildings. More
precisely, consider the group $U_i = \bigcap_{g \in B_{i}} g U
g^{-1}$ which is the kernel of the $U$-action on $\Delta_{3-i}$.
Then $U_i$ acts sharply transitively on the chambers of $\Delta_i$
which are opposite the standard chamber $C$, which by definition is
the unique chamber fixed by $B_i$. Therefore we have $B_i \cong T_i
\ltimes U_i$, where $T_i = \bigcap_{w \in W_i} w B_i w^{-1}$, and
$U_i$ induces a splitting of the BN-pair $(B_i, N_i)$ as claimed.

This shows that the given split BN-pair for $G$ yields various split
BN-pairs for $G$ corresponding to the various irreducible components
of $\Delta$. Since $\Ch \Delta$ is naturally in one-to-one
correspondence with the Cartesian product $\Ch \Delta_1 \times
\cdots \times \Ch \Delta_n$ of the chamber sets of the various
irreducible components of $\Delta$, the desired finiteness result
readily follows provided we establish it for each irreducible
BN-pair $(B_i, N_i)$ as above. In other words, there is no loss of
generality in assuming that the building $\Delta$ is irreducible. We
adopt henceforth this additional assumption.

\medskip

Let now $\PPP$ denote the set of maximal proper standard parabolic
subgroups of $G$. Pick any $P\in\PPP$. Thus $P$ is of the form $P=BW_JB$
for some maximal subset $J\subsetneq S$, where $W_J=\la J\ra$. In
particular, $P$ is a maximal subgroup of $G$ (see~\cite[Lemma
6.43(1)]{ABrown}). Define the normal subgroup
$$V:=\bigcap_{p\in P}{pU p\inv}\trianglelefteq P$$
of $P$. As $V$ is contained in $U$, it is also nilpotent. Moreover,
$V$ acts faithfully on $\Delta$. Indeed, the kernel
$\ker(G\curvearrowright \Delta)$ of the action of $G$ on $\Delta$ is
obviously contained in the stabilizer of the chambers of the
fundamental apartment $\Sigma$, that is, in $\bigcap_{w\in
W}{wBw\inv}=T$, and so
$$V \cap \ker(G\curvearrowright
\Delta)\subseteq U\cap T=\{1\}.$$

Now, since $V$ is normal in $P$, we have $P\subseteq \norma_G(V)$.
Moreover, as the conjugation automorphism $\kappa_g\co G\to G:
x\mapsto gxg\inv$ is continuous, we get
$\norma_G(\overline{V})\supseteq \norma_G(V)$ and so
$\norma_G(\overline{V})\supseteq P$. Hence, by maximality of $P$, we
obtain that either $\norma_G(\overline{V})=P$ or
$\norma_G(\overline{V})=G$.


\medskip \noindent \underline{Claim}: \emph{$\norma_G(\overline{V})= P$ for
all $P\in\PPP$.}

\medskip \noindent%
Assume for a contradiction that $\norma_G(\overline{V})= G$ for some
$P\in\PPP$. In other words, $\overline{V}\triangleleft G$. In
particular, the center $\centra(\overline{V})\subseteq \overline{V}$
of $\overline{V}$ is also a normal subgroup of $G$. Moreover, $V$ is
nontrivial since, by Proposition~\ref{prop V transitive}, it acts
transitively on $R^{\op}$ and since $|R^{\op}|\geq 2$ by
Lemma~\ref{lemma at least two opposite residues}. As $V$ is
nilpotent, this implies that $\centra(V)$ is also nontrivial.

Now, using again the continuity of the conjugation automorphism
$\kappa_h$ (for $h\in G$), we see that $\centra(V)=\centra_G(V)\cap
V$ is contained in
$\centra(\overline{V})=\centra_G(\overline{V})\cap \overline{V}$.
Moreover, as $V$ acts faithfully on $\Delta$, so does $\centra(V)$.
This implies in particular that $\centra(V)$, and thus also
$\centra(\overline{V})$, act nontrivially on $\Delta$.

Tits' transitivity Lemma (see \cite[Lemma 6.61]{Brown}) then guarantees
that the group $\centra(\overline{V})$ is transitive on the chambers
of $\Delta$. In fact, this action is even simply transitive. Indeed,
the stabilizers in $\centra(\overline{V})$ of the chambers of
$\Delta$ are all conjugate by transitivity. They are thus all equal
since $\centra(\overline{V})$ is Abelian, and are therefore
contained in the kernel $\ker(G\curvearrowright \Delta)$ of the
action of $G$ on $\Delta$. Since $\centra(V)\subseteq
\centra(\overline{V})$, this implies that the action of $\centra(V)$
on $\Ch\Delta$ is free. But since $\centra(V)\subseteq V\subseteq
U\subseteq B$, and as $B$ stabilizes the fundamental chamber, it
follows that $\centra(V)$ acts trivially on $\Delta$. This
contradiction establishes the Claim.

\medskip

Since the normalizer of a closed subgroup is closed, we deduce from
the Claim that every $P\in\PPP$ is closed.  But this means that $G$
acts continuously on $\Delta$, and so Lemma~\ref{thm continuous}
ensures that $\Delta$ is finite, as desired.
\end{proof}

\section{Proof of Theorem 4}

Let $k$ be a perfect field and let $K=\overline{k}$ be its algebraic closure. In what follows, we identify an algebraic $k$-group $G$ with its group of $K$-rational points.

The main tool for the proof of Theorem 4 is the following
characterization of anisotropy, due to Borel and Tits
(see~\cite{BTunipotent}).

\begin{prop}\label{thm BT unipotent}
Let $G$ be a reductive algebraic $k$-group and let $U$ be a
unipotent $k$-subgroup of $G$. If $k$ is perfect, then there exists
a parabolic $k$-subgroup $P$ of $G$ whose unipotent radical $R_u(P)$
contains $U$.
\end{prop}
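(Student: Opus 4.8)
\emph{Strategy.} The plan is to prove, by induction on $\dim G$, the (slightly sharper) statement that there is a parabolic $k$-subgroup $P$ with $U\subseteq R_u(P)$ \emph{and} $N_G(U)\subseteq P$; we may assume $G$ connected. The role of perfectness will be exactly to guarantee that the unipotent radical $R_u(H)$ of any connected $k$-subgroup $H\leq G$ is again defined over $k$, and likewise that Levi decompositions of $k$-subgroups of $G$ are defined over $k$. If $U=1$ take $P=G$, so assume $U\neq 1$. Everything then reduces to a single structural input (the crux below); granting it, the inductive step is formal.

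\emph{Canonicalisation.} For a unipotent $k$-subgroup $W\leq G$ set $\phi(W):=R_u\bigl(N_G(W)^{\circ}\bigr)$, a connected unipotent $k$-subgroup of $G$. I would first record two facts. (i) $W\subseteq\phi(W)$: indeed $W$ is normal in $H:=N_G(W)^{\circ}$, so its image $\overline W$ in the connected reductive group $H/R_u(H)$ is a normal unipotent subgroup; since $\overline{W}^{\circ}$ is then a connected normal unipotent subgroup it lies in the trivial unipotent radical, so $\overline W$ is finite, hence central (as $H/R_u(H)$ is connected), hence trivial since the centre of a reductive group is of multiplicative type; thus $W\subseteq R_u(H)=\phi(W)$. (ii) $N_G(W)\subseteq N_G(\phi(W))$: any $g$ normalising $W$ normalises $N_G(W)$, hence $N_G(W)^{\circ}$, hence its characteristic subgroup $\phi(W)$. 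Iterating, $U\subseteq\phi(U)\subseteq\phi^{2}(U)\subseteq\cdots$ is an ascending chain of connected unipotent $k$-subgroups (all terms from $\phi(U)$ on are connected), so it stabilises for dimension reasons at some $V:=\phi^{n}(U)$ with $\phi(V)=V$, $U\subseteq V\neq 1$, and $N_G(U)\subseteq N_G(V)$.

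\emph{The crux.} What remains is the Borel--Tits structural assertion: \emph{if $V\neq 1$ is a connected unipotent $k$-subgroup of the connected reductive $k$-group $G$ with $R_u(N_G(V)^{\circ})=V$, then $Q:=N_G(V)$ is a parabolic $k$-subgroup of $G$ with $R_u(Q)=V$}; it is automatically proper, since $Q=G$ would exhibit $V$ as a nontrivial normal unipotent subgroup of $G$. This is where the structure theory of reductive groups genuinely enters, and I expect it to be the main obstacle. The approach I would take: pick a maximal $k$-split torus $S$ of $N_G(V)^{\circ}$ (so $S$ normalises $V$), examine the weights of $S$ on the successive quotients of the descending central series of $V$, and use the fixed-point property $\phi(V)=V$ (which forbids enlarging $V$ inside its own normaliser) to show that these weights all lie in an open half-space of $X_{*}(S)_{\RR}$; a $k$-cocharacter $\lambda$ in the dual cone then produces a parabolic $P(\lambda)$ with $V\subseteq R_u(P(\lambda))$ and $N_G(V)\subseteq P(\lambda)$, and one identifies $N_G(V)$ with $P(\lambda)$ via the standard fact that $N_G(R_u(P))=P$ for every parabolic $P$. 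I would either carry this out in detail or, as the paper does, simply cite \cite{BTunipotent}.

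\emph{Completing the induction.} Granting the crux, $U\subseteq N_G(U)\subseteq Q$ with $Q$ a proper parabolic $k$-subgroup, so $\dim(Q/R_u(Q))<\dim G$. Let $\pi\co Q\to\overline{Q}:=Q/R_u(Q)$ be the (over-$k$) projection and put $\overline{U}:=\pi(U)$, a unipotent $k$-subgroup of the connected reductive group $\overline{Q}$. By the induction hypothesis there is a parabolic $k$-subgroup $\overline{P}\leq\overline{Q}$ with $\overline{U}\subseteq R_u(\overline{P})$ and $N_{\overline{Q}}(\overline{U})\subseteq\overline{P}$. Then $P:=\pi^{-1}(\overline{P})$ is a parabolic $k$-subgroup of $G$ (it contains the preimage of a Borel subgroup of $\overline{Q}$, which is a Borel subgroup of $G$), with $R_u(P)=\pi^{-1}(R_u(\overline{P}))$; since $U\subseteq\pi^{-1}(\overline{U})\subseteq\pi^{-1}(R_u(\overline{P}))=R_u(P)$ we obtain $U\subseteq R_u(P)$, and since $\pi(N_G(U))\subseteq N_{\overline{Q}}(\overline{U})\subseteq\overline{P}$ we obtain $N_G(U)\subseteq\pi^{-1}(\overline{P})=P$, as desired.
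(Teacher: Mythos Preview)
The paper does not prove this proposition: it is quoted directly from Borel--Tits \cite{BTunipotent} as a black box, with no argument supplied. Your sketch, by contrast, reproduces the actual Borel--Tits canonicalisation procedure (iterate $W\mapsto R_u(N_G(W)^{\circ})$ until stable, then show the normaliser of the limit is parabolic), and is essentially correct in outline.

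Two small remarks on the sketch itself. First, in fact~(i) you write ``$W$ is normal in $H:=N_G(W)^{\circ}$'', which presupposes $W\subseteq N_G(W)^{\circ}$; this can fail when $W$ is disconnected, in particular for the initial $W=U$. The standard repair (and the one used in \cite{BTunipotent}) is to replace $N_G(W)^{\circ}$ by the $k$-group $N_G(W)^{\circ}\cdot W$, in which $W$ is genuinely a normal subgroup. Second, your final induction step is superfluous: once the crux hands you $Q=N_G(V)$ parabolic with $R_u(Q)=V\supseteq U$ and $N_G(U)\subseteq N_G(V)=Q$, you are already done with $P=Q$. Indeed, in your own notation $\overline U=\pi(U)=1$ since $U\subseteq V=\ker\pi$, so the induction hypothesis returns $\overline P=\overline Q$ and hence $P=Q$ tautologically. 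Finally, your cocharacter approach to the crux is plausible but is not the route of \cite{BTunipotent}, which argues more directly that $N_G(V)^{\circ}$ contains a Borel subgroup of $G$; the half-space property of the $S$-weights on $V$ that you invoke would need substantially more justification to stand on its own.
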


In particular, if $G$ is anisotropic over $k$, then $U$ must be
trivial.

\begin{proof}[Proof of Theorem 4]
Suppose for a contradiction that the split spherical BN-pair $(B,N)$
for the reductive $k$-group $G$ is such that $B$ has infinite index
in $G(k)$. Let $\Delta=\Delta(G(k),B)$ be the associated building,
and let $W$ be the corresponding (finite) Weyl group. Also, denote
by $\overline{B}$ the Zariski closure of $B$ in $G$.

The Bruhat decomposition for $G$ yields $G=\coprod_{w\in W}{BwB}$.
Since $G(k)$ is Zariski dense in $G$ by \cite[18.3]{Borel}, we have
$$G=\overline{G(k)} = \overline{\coprod_{w\in W} BwB} \subseteq \coprod_{w\in W}{\overline{BwB}}.$$
As $G$ is connected, it cannot be written as a finite union of
closed subsets in a nontrivial way. Therefore, we deduce that $BwB$
is dense in $G$ for some $w \in W$. In particular, so is $\overline
B w \overline B$.

Let now $A:=(\overline{B})^0$ be the identity component of
$\overline B$. Since $A$  has finite index in $\overline B$, it
follows that $\overline Bw \overline B$ is a finite union of double
cosets modulo $A$. As before, this implies that some double coset of
the form $AzA$ is dense in $G$.

\medskip \noindent \underline{Claim}: \emph{$\overline{B}\neq G$.}

\medskip \noindent
Indeed, let $U$ be the nilpotent normal subgroup of $B$ arising from
the splitting of the BN-pair, and suppose for a contradiction that
$B$ is dense in $G$. Then the Zariski closure $\overline{U}$ of $U$
in $G$ is a nilpotent normal subgroup of $\overline{B}=G$
(\cite[2.1]{Borel}). Its identity component $\overline{U}^0$ is thus
contained in the radical of $G$, which coincides with the connected
center $\centra(G)^0$ (\cite[11.21]{Borel}). Hence, since
$\overline{U}^0$ has finite index in $\overline{U}$, we get
$$[U:U\cap \centra(G)]\leq [U:U\cap \overline{U}^0]=
[U\overline{U}^0:\overline{U}^0]\leq
[\overline{U}:\overline{U}^0]<\infty.$$ Now, if $u\in U\cap
\centra(G)$, then $u$ acts trivially on $\Delta$ since for any
chamber $gB$, we have $ugB=guB=gB$. As $U$ acts simply transitively
on $C^{\op}$ by Lemma~\ref{lemma U transitive}, where $C=1_GB$ is
the fundamental chamber of $\Delta$, this implies that $u=1$:
otherwise, $\Delta$ would contain only one apartment, so that
$[G(k):B]<\infty$, a contradiction. So $U\cap \centra(G)=\{1\}$ and
therefore $U$ is finite. Using again the sharp transitivity of $U$
on $C^{\op}$, we deduce that $\Delta$ is the reunion of finitely
many apartments, hence is finite,  contradicting once more our
initial hypothesis. The claim stands proven.

\medskip
In particular $A$ is a proper closed subgroup of $G$ such that $Az
A$ is dense in $G$ for some $z \in G$.  The main result
of~\cite{Brundan} now implies that $A$ is not reductive, \emph{i.e.}
the unipotent radical $R_u(A)$ is nontrivial. Moreover, since $B$ is
contained in $G(k)$ and is dense in $\overline{B}$, we know that
$\overline{B}$ is defined on $k$ (\cite[AG.14.4]{Borel}). Hence, $A$
is also $k$-defined (\cite[1.2]{Borel}), and so is $R_u(A)$ since
$k$ is perfect (\cite[12.1.7(d)]{Springer}). Thus $R_u(A)$ is a nontrivial unipotent $k$-subgroup of $G$. As we observed following Proposition \ref{thm BT unipotent}, this contradicts the assumption that $G$ is anisotropic over $k$.
\end{proof}

\end{document}